\newtheorem{theorem}{Theorem}[section]
\newtheorem{corollary}[theorem]{Corollary}
\newtheorem{lemma}[theorem]{Lemma}
\newtheorem{proposition}[theorem]{Proposition}
\newtheorem{letterthm}{Theorem}
\newtheorem{letterques}[letterthm]{Question}
\theoremstyle{definition}
\newtheorem{definition}[theorem]{Definition}
\newtheorem{notation}[theorem]{Notation}
\theoremstyle{remark}
\newcommand{\odd}{odd}
\newcommand{\even}{even}
\newcommand{\cB}{\mathcal{B}}
\newcommand{\cG}{\mathcal{G}}
\newcommand{\cY}{\mathcal{Y}}
\newcommand{\cR}{\mathcal{R}}
\newcommand{\cS}{\mathcal{S}}
\newcommand{\cV}{\mathcal{V}}
\newcommand{\s}{\text{s}}
\newcommand{\nsi}{\text{ns}}
\newcommand{\N}{\mathbb{N}}
\newcommand{\G}{\mathfrak{G}}
\newcommand{\Perm}{\text{Perm}}
\newcommand{\actson}{\curvearrowright}
\DeclareMathOperator{\id}{id}
\DeclareMathOperator{\Iso}{Iso}
\def\thanks#1{\protected@xdef\@thanks{\@thanks
        \protect\footnotetext{#1}}}
\begin{document}

\title{Measured groupoids beyond equivalence relations and group actions}
\author{
Soham Chakraborty \thanks{
\hspace{-2 em} \faMapMarker\hspace{0.18 em}: KU Leuven, Department of Mathematics, Leuven, Belgium \\ 
\Letter : \texttt{soham.chakraborty@kuleuven.be, soham.chakraborty.math@gmail.com}
}
} 

\date{\today}

\setlength{\parindent}{0em}

\maketitle 

\begin{abstract}\noindent
We construct the first examples of \textit{genuine} ergodic discrete measured groupoids that are not isomorphic to any equivalence relation or transformation groupoid. We use a construction due to B.H. Neumann of an uncountable family of pairwise non-isomorphic 2-generated groups for our result.

\end{abstract}

\section{Introduction}

Discrete measured groupoids were introduced by Mackey in \cite{Mackey62} and provide a uniform framework to study discrete countable groups, countable measured equivalence relations and (measurable) actions of countable discrete groups on measure spaces. In \cite{Hahn78}, Hahn studied the left regular representation of such groupoids and introduced their von Neumann algebras. Measured groupoids are of interest from an operator algebraic point of view because they give a huge class of examples of von Neumann algebras, that covers group von Neumann algebras, von Neumann algebras with Cartan subalgebras and group measure space von Neumann algebras. Every discrete measured groupoid comes equipped with an associated countable measured equivalence relation and a measured field of isotropy groups. 

As in the case of groups and group actions, several properties of the associated von Neumann algebra is completely captured by the groupoid. In \cite{BCDK24} we introduced the \textit{infinite conjugacy class (icc)} property for groupoids and proved that the von Neumann algebra associated to a discrete measured groupoid is a factor if and only if the groupoid is icc and satisfies an ergodicity condition. In some sense, the conditions as well as the proof of the main theorem in \cite{BCDK24} resemble the cases of groups and group actions. Moreover, the literature on discrete measured groupoids lacks concrete examples beyond equivalence relations and transformation groupoids. This naturally leads
to the question of how much more general the class of ergodic groupoids is compared to the subclass of ergodic equivalence relations and transformation groupoids arising from ergodic actions. More precisely, are there ergodic discrete measured groupoids that are not isomorphic to equivalence relations or transformation groupoids? 

This leads to the natural definition of genuine discrete measured groupoids. We call an ergodic groupoid \textit{genuine} if it is not isomorphic to a transformation groupoid or an equivalence relation. More generally, a discrete measured groupoid is called \textit{genuine} if its direct integral decomposition into ergodic groupoids (as in \cite{Hahn78}) has a positive measure part consisting of ergodic genuine groupoids. This article deals with the following question. 

\begin{letterques}
    Are there genuine discrete measured groupoids? More generally, are there genuine discrete measured groupoids that admit any 
 countable measured equivalence relation? 
\end{letterques}

In this article, we answer this question in the positive for ergodic equivalence relations on a diffuse standard probability space. Moreover, we show that ergodic groupoids with atomic unit spaces are always transformation groupoids, hence answering the question in the negative for such equivalence relations. To summarize, in Theorem \ref{Thm: main theorem ergodic} we prove the following result: 

\begin{letterthm}
    Let $\cR$ be an ergodic equivalence relation on a standard probability space $(X,\mu)$. Then the following are equivalent: 
        \begin{enumerate}
            \item The probability measure $\mu$ is diffuse.
            \item There exists a genuine discrete measured groupoid $\cG$ with unit space $(X,\mu)$ and associated equivalence relation $\cR$. 
        \end{enumerate}
\end{letterthm}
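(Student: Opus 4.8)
\noindent The statement is an equivalence, and I would prove the two implications by quite different means: $(2)\Rightarrow(1)$ by a soft argument using the atomic case already recorded in the introduction, and $(1)\Rightarrow(2)$ by an explicit construction built on Neumann's family.

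\smallskip
\noindent\textbf{$(2)\Rightarrow(1)$.} I would argue the contrapositive. If $\mu$ is not diffuse it has an atom $x_0$; by quasi-invariance every point of the orbit $[x_0]_\cR$ is then an atom, and $[x_0]_\cR$ is a countable, $\cR$-invariant set of positive measure, so ergodicity forces $\mu([x_0]_\cR)=1$. Thus $(X,\mu)$ is purely atomic and $\cR$ is the full relation on it. Any discrete measured groupoid $\cG$ with unit space $(X,\mu)$ and associated equivalence relation $\cR$ is ergodic (a set is $\cG$-invariant iff it is $\cR$-invariant) with atomic unit space, hence by the result recalled in the introduction it is a transformation groupoid and in particular not genuine. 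So no genuine $\cG$ with associated relation $\cR$ can exist, which is exactly the contrapositive.

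\smallskip
\noindent\textbf{$(1)\Rightarrow(2)$: the construction.} Assume $\mu$ diffuse, so $\cR$, being a countable ergodic equivalence relation on a diffuse space, is non-smooth. By Feldman--Moore I would choose Borel maps $\phi_n\colon X\to X$ with $[x]_\cR=\{\phi_n(x):n\in\N\}$, and fix a Borel injection $\iota\colon X\hookrightarrow 2^\N$. Then $x\mapsto D(x):=\{\iota(\phi_n(x)):n\in\N\}$ is Borel, depends only on $[x]_\cR$, distinguishes distinct classes, and has the crucial feature that each fibre $D^{-1}(D_0)$ is contained in a single $\cR$-class, hence $\mu$-null. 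The heart of the argument is to convert the orbit datum $D(x)$ into a $2$-generated group: using Neumann's uncountable family I would build a Borel rule $D\mapsto N_D$ from countable subsets of $2^\N$ to $2$-generated groups with $N_D\cong N_{D'}\Rightarrow D=D'$ and with $N_D$ canonically independent of any enumeration of $D$ (an ``unordered'' two-parameter variant of Neumann's construction). Setting $N_x:=N_{D(x)}$ gives a Borel field $\mathcal N=(N_x)_x$ of $2$-generated groups, isomorphism-invariant along $\cR$ (indeed $N_x\cong N_y$ whenever $(x,y)\in\cR$), in which each isomorphism type occurs only on the null set $D^{-1}(D_0)$. Finally I would let $\cG$ be the extension of $\cR$ by $\mathcal N$ obtained by gluing the morphisms $\{(x,g,y):(x,y)\in\cR,\ g\in N_x\}$ along the canonical isomorphisms $N_x\xrightarrow{\sim}N_y$; it has unit space $(X,\mu)$, isotropy bundle $\mathcal N$, and associated equivalence relation $\cR$.

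\smallskip
\noindent\textbf{Genuineness.} Because each isomorphism type occurs on a $\mu$-null set and $\mu$ is a probability measure, for every Borel $A\subseteq X$ with $\mu(A)>0$ the partition $A=\bigsqcup_{[N_0]}\{x\in A:N_x\cong N_0\}$ has uncountably many non-empty pieces; taking $A=X$ this shows $\{x:N_x\neq 1\}$ is conull, so $\cG$ is not an equivalence relation. If $\cG\cong G\ltimes Y$ with $G$ a countable group acting on $Y\cong(X,\mu)$, then $\Stab_G(y)\cong N_x$ for a.e.\ $y$, so uncountably many pairwise non-isomorphic $2$-generated groups would occur among the stabilizers, i.e.\ $G$ would have uncountably many pairwise non-isomorphic $2$-generated subgroups. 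But a $2$-generated subgroup of $G$ is given by a pair of elements of $G$, of which there are only countably many --- a contradiction. Hence $\cG$ is neither an equivalence relation nor a transformation groupoid, so it is genuine.

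\smallskip
\noindent\textbf{Where the difficulty lies.} The atomic-unit-space step and the verification that the glued data really form a discrete measured groupoid (measurability of multiplication and inversion, existence of the Haar system) I expect to be routine. The genuine obstacle is the middle step of $(1)\Rightarrow(2)$: realising Neumann's uncountable family as a \emph{Borel} rule $D\mapsto N_D$ that is at once $2$-generated–valued, faithful up to isomorphism, and independent of the chosen enumeration of $D$ --- this last point being precisely what makes $(N_x)_x$ isomorphism-invariant along $\cR$ and the gluing cocycle coherent. This is where Neumann's $2$-generated groups are indispensable (e.g.\ the rank-one subgroups of $\mathbb Q$ would give an invariant Borel field but would not be boundedly generated, so the counting argument for genuineness would fail).
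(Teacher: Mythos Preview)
The real gap is in $(1)\Rightarrow(2)$. Your construction hinges on a Borel, $\cR$-invariant assignment $x\mapsto D(x)$ that separates $\cR$-classes, from which you set $N_x:=N_{D(x)}$. But $\cR$ is \emph{ergodic}: any $\cR$-invariant Borel map from $X$ into a standard Borel space is constant almost everywhere. So either $D$ does not land in a standard Borel space --- and then ``$D$ is Borel'' has no content and cannot be used to produce a Borel field of groups in the sense of Definition~\ref{Def: measured field of countable discrete groups} --- or $D$ is essentially constant and all your $N_x$ coincide, destroying the genuineness argument. Put differently, a Borel $D$ separating orbits would be a Borel reduction of $\cR$ to equality, making $\cR$ smooth, contradicting that $\mu$ is diffuse. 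The ``unordered, enumeration-free variant of Neumann's construction'' you ask for is exactly such an invariant, and ergodicity forbids it. (Your $(2)\Rightarrow(1)$ is fine in outline, though the atomic-unit-space fact you cite from the introduction is actually proved \emph{inside} this theorem, not before it; as you note, it is routine.)

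The paper evades this obstruction by \emph{not} making the isotropy field $\cR$-invariant as a function of $x$. One transports Neumann's field to obtain pairwise non-isomorphic $2$-generated groups $(G_x)_{x\in X}$ with no $\cR$-invariance whatsoever, chooses a basis $\{\theta_n\}$ of global Borel bisections of $\cR$, and sets $H_x=\bigoplus_n G_{\theta_n(x)}$. Now $H_x\cong H_y$ for $(x,y)\in\cR$ because both are direct sums of the same family $\{G_z:z\in[x]_\cR\}$, but the \emph{indexing} genuinely depends on $x$; the $\cR$-action $\delta_{(y,x)}$ is the explicit permutation of summands $\sigma_{(y,x)}=\phi_y^{-1}\circ\phi_x$, where $\phi_x(n)=\theta_n(x)$. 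The isotropy $H_x$ is no longer $2$-generated, but it contains the $2$-generated subgroup $G_x$, and your own counting argument (a countable group has only countably many finitely generated subgroups) still rules out the transformation-groupoid case. The step you are missing is precisely this: abandon the attempt to make the isotropy groups canonically depend only on the orbit, and instead build the $\cR$-equivariance by hand via permutations of a direct sum.
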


The main idea comes from the following easy observation: If $G \ltimes X$ is the transformation groupoid of a non-singular group action $G \actson (X,\mu)$, the isotropy group at a point $x \in X$ is the stabilizer $\{g \in G  \; | \; gx = x\}$ of the action, and in particular a subgroup of $G$. Thus if we can construct a measured field of groups $(G_{x})_{x \in X}$ such that they cannot be subgroups of a single discrete countable group, then such a measured field of groups cannot be the isotropy part of a transformation groupoid. To construct such a measured field, we use B.H. Neumann's construction from \cite{Neumann37} of an uncountable family of 2-generated groups that are pairwise non-isomorphic. 

The interesting point about Neumann's construction is that he associates to every strictly increasing sequence of odd integers $U$, a countable set of elements $X_{U}$ and permutations $\alpha_{U},\beta_{U} \in \Perm(X_U)$ in a way such that the groups $G_{U}$ generated by these permutations are mutually non-isomorphic. We can use the Borel bijection between $(0,1)$ and $\mathbb{N}^{\mathbb{N}}$ to get a standard Borel structure on the set $Y$ of all strictly increasing sequences of odd integers. Due to the canonical nature of the construction, we can endow $(G_{U})_{U \in Y}$ with a standard Borel structure that makes it a Borel field of groups as in Corollary \ref{Corr: G_U forms a Borel field of countable discrete groups}. Now, by equipping $Y$ with any diffuse Borel probability measure, we get a measured field of groups which can not be the isotropy part of any transformation groupoid. 

For an ergodic equivalence relation $\cR$ on a diffuse standard probability space $(X,\mu)$, we use a basis of Borel bisections (as in \cite[Definition 3.2]{BCDK24}) to construct a new measured field of groups $(H_{x})_{x \in X}$ such that $H_{x}$ is the countable direct sum of $\{G_{y} \}$ where $y$ ranges over the entire orbit of $x$. This gives a new field of groups where $H_{x}$ is isomorphic to $H_{y}$ whenever $(x,y) \in \cR$. This allows us to construct an action of $\cR$ on the measured field of groups $(H_{x})_{x \in X}$ such that the semidirect product is not a transformation groupoid. Using a wreath product construction, we can even construct genuine ergodic groupoids satisfying the icc property, so that the associated von Neumann algebras are factors. In Corollary \ref{Corr: groupoids of type I are not genuine}, we finally show that our result extends to all discrete measured groupoids (not necessarily ergodic) and we can construct genuine groupoids whenever the associated equivalence relation is not of type I.  

We remark that in the proof of Theorem \ref{Thm: main theorem ergodic}, we only use a measured field of groups that is pairwise non-isomorphic and finitely generated (and Neumann's construction provides an example of such a field). One interesting observation is that our construction fails if we restrict ourselves to a class of discrete countable groups that always embed into one universal group. One example is the class of \textit{locally finite groups}, i.e., the ones where every finitely generated subgroup is finite. There is a universal locally finite group called \textit{Hall's universal group} that contains a copy of every locally finite group. It would be interesting to investigate the following question: 

\begin{letterques}
    Are there genuine discrete measured groupoids such that the isotropy groups are locally finite almost everywhere? If so, are there genuine discrete measured groupoids with any prescribed equivalence relation, such that the isotropy groups are locally finite almost everywhere? 
\end{letterques}

\textbf{Acknowledgements:} The author was supported by FWO research project G090420N of the Research Foundation Flanders from October 2020 until September 2024, during which time this article was written. The author would like to thank his PhD supervisor Stefaan Vaes for some insightful comments on the article. The author would also like to thank Milan Donvil, Se-Jin Kim and Felipe Flores for some useful discussions and comments.

\section{Preliminaries}

\subsection{Discrete measured groupoids}

A \textit{discrete Borel groupoid} $\cG$ is a groupoid with a standard Borel structure such that the unit space denoted by $\cG^{(0)}$ is a Borel subset and the source and target maps, denoted by $s$ and $t$ respectively are countable-to-one Borel maps. Letting $X = \cG^{(0)}$, if $\mu$ is a Borel probability measure on $X$ then the source and target maps induce two $\sigma$-finite measures $\mu_{s}$ and $\mu_{t}$ on $\cG$ respectively given by: 
\begin{align*}
    \mu_{s} (A) &= \int_{X} \# \{A \cap s^{-1}(x) \} \; d\mu(x) \\ \mu_{t}(A) &= \int_{X}\# \{A \cap t^{-1}(x) \} \; d\mu(x)
\end{align*}

Such a discrete Borel groupoid $\cG$ will be called a \textit{discrete measured groupoid with unit space $(X,\mu)$} if the measures $\mu_{s}$ and $\mu_{t}$ are equivalent, i.e., they have the same null sets. Two such groupoids $\cG_1$ and  $\cG_{2}$ with unit spaces $(X_1,\mu_1)$ and $(X_2,\mu_2)$ are called \textit{isomorphic} if there is a nonsingular Borel isomorphism $\phi: \cG_{1} \rightarrow \cG_{2}$ that preserves multiplication and inverses. A subset $B$ of a discrete measured groupoid $\cG$ such that $s|_{B}$ and $t|_{B}$ are both injective is usually called a \textit{bisection}. If $s|_{B}: B \rightarrow \cG^{(0)}$ and $t|_{B}: B \rightarrow \cG^{(0)}$ are moreover surjective and hence both bijections, then such a bisection is called a \textit{global bisection}. In this context we usually work with \textit{Borel bisections}, i.e., bisections that are Borel subsets of $\cG$.

The most basic example of a discrete measured groupoid is a countable discrete group with a unit space consisting of a single point. More generally for any such groupoid and any point $x \in X$, the \textit{isotropy group at $x$} is the discrete countable group $\Gamma_x = \{g \in \cG \; | \; s(g) = t(g) = x\}$. The subgroupoid $\Iso(\cG)$ given by $\{g \in \cG \: | \; s(g) = t(g)\}$ is called the \textit{isotropy subgroupoid of $\cG$} or the \textit{isotropy group bundle}. The isotropy subgroupoid consists of a measured family of discrete countable groups $(\Gamma_{x})_{x \in X}$, which can be made precise as follows: 

\begin{definition}
\label{Def: measured field of countable discrete groups}
  \begin{enumerate}
      \item Let $G = (G_{z})_{z \in Z}$ be a family of countable discrete groups indexed over a standard Borel space $Z$. Then $G$ is called a \textit{Borel field of countable discrete groups} if there is a standard Borel structure on the disjoint union $\bigsqcup_{z \in Z}G_{z}$ (which we still denote by $G$) such that the projection map $\pi: G \rightarrow Z$, the multiplication map $G \times G \rightarrow G$, the inverse map $G \rightarrow G$ and the identity section $Z \rightarrow G$ are all Borel maps. 
      
      \item If $\eta$ is a Borel probability measure on $Z$, then $(G_{z})_{z \in Z}$ is called a \textit{measured field of discrete countable groups} over $(Z,\eta)$ if there is a $\eta$-conull subset $Z_{0} \subseteq Z$ such that with the restricted Borel structure on $\pi^{-1}(Z_{0})$,  $(G_{z})_{z \in Z_{0}}$ is a Borel field of discrete countable groups. 
  \end{enumerate}  
\end{definition}

We remark here that while defining a Borel fields of Polish groups $K = (K_{x})_{x \in X}$ in the traditional way (see \cite{Sutherland85}), one also requires the existence of enough Borel sections, i.e., a sequence of Borel sections $\theta_{n} : X \rightarrow K$ such that at each $x \in X$, $\{\theta_{n}({x}) \; | \; n \in \mathbb{N}\}$ is dense in $K_{x}$. Such a sequence of sections is called a \textit{total set of sections}. However note that in the case of discrete countable groups, this condition is automatic, essentially due to the Lusin-Novikov uniformization theorem (see \cite[Theorem 18.10]{Kec95}).

Any such measured field of discrete countable groups forms a discrete measured groupoid. Conversely given any discrete measured groupoid $\cG$ with unit space $(X,\mu)$, the isotropy bundle $(\Gamma_{x})_{x \in X}$ is a measured field of discrete countable groups, which we also call the isotropy subgroupoid $\Iso(\cG)$ depending on the context.

Recall that for a sequence of countable discrete groups $H_{n}$, the direct sum $\bigoplus_{n} H_{n}$ is the group consisting of all sequences of elements $(h_{n})_{n}$ such that $h_{n} = \id_{n} \in H_{n}$ for all but finitely many $n$. Recall that for groups $H$ and $K$, the wreath product $H \wr K$ is the semidirect product group formed from the action $K \actson \bigoplus_{K} H$ given by $k \cdot (h)_l = (h)_{kl}$. The following two lemmas are straightforward and we leave the proofs as exercises.   

\begin{lemma}
    \label{Lemma: sums of fields of groups}
    Let $(Z,\eta)$ be a standard probability space and let $G^{n} = (G^{n}_{z})_{z \in Z}$ be a countable sequence of measured fields of discrete countable groups. Then the countable direct sum $G = \bigoplus_{n} G^{n} = (\bigoplus_{n}(G^{n}_{z}))_{z \in Z}$ is also a measured field of discrete countable groups. 
\end{lemma}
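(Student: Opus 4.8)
The plan is to first discard a $\eta$-null set so that every $G^{n}=(G^{n}_{z})_{z\in Z}$ becomes an honest Borel field of countable discrete groups over a common conull set: replace $Z$ by the intersection $Z_{0}$ of the conull sets provided by Definition \ref{Def: measured field of countable discrete groups}(2), which is again conull. Over such a $Z_{0}$ I would realize the disjoint union in question as a Borel subset of a standard Borel space. Give $\prod_{n}G^{n}$ the product Borel structure; a countable product of standard Borel spaces is standard Borel. Since each $\pi_{n}\colon G^{n}\to Z_{0}$ is Borel and $Z_{0}$ is standard, the "same base point" conditions are Borel, so
\[
P=\Bigl\{(g_{n})_{n}\in\prod_{n}G^{n}\ \Big|\ \pi_{m}(g_{m})=\pi_{n}(g_{n})\text{ for all }m,n\Bigr\}
\]
is a Borel, hence standard Borel, subset, and it is canonically the disjoint union $\bigsqcup_{z\in Z_{0}}\prod_{n}G^{n}_{z}$, i.e.\ the "full direct product" field.

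Next I would cut out the direct sum inside $P$. The identity section $\epsilon_{n}\colon Z_{0}\to G^{n}$ is a Borel injection, so by the Lusin--Souslin theorem its image $E_{n}=\{\epsilon_{n}(z):z\in Z_{0}\}$ --- the set of fiberwise identities of $G^{n}$ --- is Borel in $G^{n}$. Hence
\[
G=\Bigl\{(g_{n})_{n}\in P\ \Big|\ g_{n}\in E_{n}\text{ for all but finitely many }n\Bigr\}=\bigcup_{N\in\N}\ \bigcap_{n\ge N}\{(g_{n})_{n}\in P:g_{n}\in E_{n}\}
\]
is Borel in $P$, so $G$ is a standard Borel space, and under the evident bijection it is exactly $\bigsqcup_{z\in Z_{0}}\bigoplus_{n}G^{n}_{z}$ (the base point $z$ is recovered from any coordinate via $\pi_{n}$, including from the all-identity element). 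The projection $G\to Z_{0}$ is a coordinate projection followed by some $\pi_{n}$, hence Borel; the identity section $z\mapsto(\epsilon_{n}(z))_{n}$ is Borel; inversion and multiplication are computed coordinatewise, agreeing on each coordinate with the Borel inverse and multiplication maps of $G^{n}$, so they are Borel on $G$ and on $G\times_{Z_{0}}G$ (itself a Borel subset of $G\times G$). This verifies every requirement of Definition \ref{Def: measured field of countable discrete groups}(1) on the conull set $Z_{0}$, which is precisely what part (2) demands; the automatic existence of a total set of sections is the remark following that definition.

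The only genuinely delicate point is the measurability of the "cofinitely many coordinates trivial" condition, and it is handled exactly as above: the fiberwise identity of a Borel field of countable groups is a Borel section, and injective Borel images of standard Borel spaces are Borel, so the sets $E_{n}$ are Borel. Everything else is the routine observation that coordinatewise operations on a countable product of standard Borel spaces remain Borel, and I would not belabor it.
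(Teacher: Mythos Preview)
Your argument is correct. The paper itself does not prove this lemma; immediately after stating it the author writes that the two lemmas are ``straightforward and we leave the proofs as exercises,'' so there is no proof in the paper to compare against. Your write-up is exactly the kind of routine verification the author has in mind: pass to a common conull base, embed the direct sum fiberwise into the standard Borel product $\prod_{n}G^{n}$, and check that the ``all but finitely many coordinates trivial'' condition and the structure maps are Borel. The one step that is not entirely formal --- that the fiberwise identity locus $E_{n}\subset G^{n}$ is Borel --- you handle correctly via Lusin--Souslin applied to the Borel injection $\epsilon_{n}$; the rest is bookkeeping with countable products and coordinatewise maps.
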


\begin{lemma}
\label{Lemma: wreath products}
    Let $(Z, \eta)$ be a standard probability space and let $(G_{z})_{z \in Z}$ be a measured field of discrete countable groups. Let $K$ be a discrete countable group. Then the wreath products $(G_{z} \wr K)_{z \in Z}$ form a measured field of discrete countable groups. 
\end{lemma}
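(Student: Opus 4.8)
The plan is to build $G_z\wr K$ out of the previous two constructions together with some elementary Borel bookkeeping. Fix an enumeration $K=\{k_0,k_1,k_2,\dots\}$ and set $G^n:=G=(G_z)_{z\in Z}$ for every $n\in\N$. By Lemma \ref{Lemma: sums of fields of groups} the countable direct sum $F:=\bigoplus_n G^n=(F_z)_{z\in Z}$, where $F_z=\bigoplus_{k\in K}G_z$, is again a measured field of discrete countable groups; write $F=\bigsqcup_{z\in Z}F_z$ for its total space with the resulting standard Borel structure and $\pi_F\colon F\to Z$ for the projection. Since $G_z\wr K=F_z\rtimes K$ as groups, set-theoretically $\bigsqcup_{z\in Z}(G_z\wr K)$ is in canonical bijection with $F\times K$, and I equip it with the product Borel structure. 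This is standard Borel because $F$ is standard Borel and $K$ is a countable (discrete) space, and each fibre $G_z\wr K$ is a countable discrete group since $G_z$ and $K$ are. Under this identification the projection $F\times K\to Z$ is $(f,k)\mapsto\pi_F(f)$ and the identity section is $z\mapsto(\id_{F_z},\id_K)$, both Borel because the corresponding maps for $F$ are.

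The only point requiring an argument, and the one I expect to be the main (and really the only) obstacle, is that the coordinate-shift action $\sigma\colon K\times F\to F$, $(k,f)\mapsto k\cdot f$ with $(k\cdot f)_l=f_{kl}$, is Borel. Since $K$ carries the discrete Borel structure, it is enough to show that each $\sigma_k\colon F\to F$ is Borel. For a finite set $S\subseteq K$ let $P_S:=\{\, f\in F:\supp(f)\subseteq S \,\}$; these are Borel subsets of $F$ (each is a finite fibred product of copies of $G$ over $Z$, up to the obvious zero-padding), they exhaust $F$, and $\sigma_k$ maps $P_S$ bijectively onto $P_{k^{-1}S}$ via a fixed permutation of the finitely many active coordinates. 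Hence $\sigma_k$ restricted to each $P_S$ is Borel with Borel image, so $\sigma_k$ is Borel on $F$, and therefore $\sigma$ is Borel.

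Granting this, the remaining structure maps are compositions of Borel maps. Multiplication on $F\times K$ is $\bigl((f_1,k_1),(f_2,k_2)\bigr)\mapsto\bigl(f_1\cdot\sigma_{k_1}(f_2),\ k_1k_2\bigr)$, which is Borel because multiplication in $F$ is Borel (Lemma \ref{Lemma: sums of fields of groups}), $\sigma$ is Borel, and multiplication in the countable group $K$ is automatically Borel; likewise the inverse is $(f,k)\mapsto\bigl(\sigma_{k^{-1}}(f^{-1}),\ k^{-1}\bigr)$, a composition of the Borel inverse map of $F$ with $\sigma$. Thus all of the maps in Definition \ref{Def: measured field of countable discrete groups}(1) are Borel on $F\times K$. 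Finally, if $Z_0\subseteq Z$ is the $\eta$-conull set over which $(G_z)_{z\in Z}$, hence $F$, is a genuine Borel field, then over $Z_0$ all of the above becomes a Borel field, so $(G_z\wr K)_{z\in Z}$ is a measured field of discrete countable groups, as claimed. The only real work is the Borel bookkeeping of the infinite direct sum and the shift, which is precisely what Lemma \ref{Lemma: sums of fields of groups} is designed to absorb.
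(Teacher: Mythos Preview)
Your proposal is correct. The paper itself does not give a proof of this lemma: it explicitly says ``The following two lemmas are straightforward and we leave the proofs as exercises,'' so there is nothing to compare against beyond noting that your write-up is exactly the kind of routine verification the author had in mind. You reduce to Lemma~\ref{Lemma: sums of fields of groups} to obtain the base field $F=\bigoplus_K G$, take the product Borel structure on $F\times K$, and check that the semidirect-product multiplication and inverse are Borel once the coordinate shift $\sigma_k$ is. Your argument for the latter via the finite-support strata $P_S$ is clean and correct.

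One cosmetic remark: your formula $(k\cdot f)_l=f_{kl}$ is the opposite convention from the paper's $k\cdot(h)_l=(h)_{kl}$ (which unravels to $(k\cdot f)_l=f_{k^{-1}l}$). This is harmless for the proof---either way $\sigma_k$ is a fixed permutation of the $K$-indexed coordinates and your $P_S$ argument applies verbatim---but you may want to align conventions if you are inserting this into the paper.
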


Another important class of examples of discrete measured groupoids come from countable measured equivalence relations. An equivalence relation $\cR$ on a standard Borel space $X$ is called \textit{countable Borel} if it is a Borel subset of $X \times X$ and every $\cR$-orbit is countable. If $\mu$ is a Borel probability measure on $X$, then $\cR$ is called a \textit{non-singular countable measured equivalence relation on $(X,\mu)$} if for every non-null Borel subset $E \subseteq X$, the $\cR$-saturation $\cR(E) = \{y \in X \; | \; (y,x) \in \cR \text{ for some } x \in E\}$ is non-null. 

In this case the projections $\pi_{s}$ and $\pi_{t}$ sending an element $(x,y)$ to $y$ and $x$ respectively are countable-to-one maps and send Borel sets to Borel sets. It turns out that the non-singularity condition above is equivalent to $\mu_{s}$ and $\mu_{t}$ being equivalent measures on $\cR$ (see \cite[Theorem 2]{Feldman-Moore-1}). Any discrete measured groupoid $\cG$ on $(X,\mu)$ gives rise to a countable measured equivalence relation $\cR_{\cG}$ given by: 
\begin{align*}
    \cR_{\cG} = \{(t(g),s(g)) \; | \; g \in \cG\}
\end{align*}

Orbit equivalence relations are the most basic examples of equivalence relations. Let $G$ be a countable discrete group and $G \actson (X,\mu)$ be a nonsingular action of $G$ on a standard probability space. Then the equivalence relation $\cR_{G \actson X}$ given by $\{(gx,x) \; | \; g \in G, x \in X\}$ is a countable measured equivalence relation called the \textit{orbit equivalence relation}. As shown in \cite[Theorem 1]{Feldman-Moore-1}, every countable measured equivalence relation is in fact isomorphic to an orbit equivalence relation of a countable group action. 

With any such action, one can also associate its transformation groupoid which intersects in some situations with the orbit equivalence relations as we see now. Let $G \actson (X,\mu)$ be a non-singular action on a standard probability space and consider the groupoid $X \rtimes G = \{(x,g) \; | \; x \in X, g \in G\}$ with unit space $(X,\mu)$ such that $s(x,g) = x$ and $t(x,g) = gx$. Multiplication and inverses are defined naturally as follows for all $g,h \in G$ and $x \in X$:  
\begin{align*}
    (gx,h)(x,g) = (x,hg) \text{ and } (g,x)^{-1} = (gx,g^{-1})
\end{align*}
The isotropy group $\Gamma_x$ at $x \in X$ is then isomorphic to the stabilizer $\{g \in G \; | \; gx = x\}$. If the stabilizer groups are trivial almost everywhere, i.e. when the action is \textit{essentially free}, then $X \rtimes G$ is isomorphic to the orbit equivalence relation as a discrete measured groupoid, where the isomorphism is given by $(x,g) \mapsto (gx,x)$. However the converse is not true, i.e., there are countable measured equivalence relations which do not arise as orbit equivalence relations of essentially free actions, and are hence not isomorphic to transformation groupoids, as shown by Furman in \cite{Furman99}. 

Recall from \cite{BCDK24} that a discrete measured groupoid $\cG$ is called \textit{icc} if any non-null Borel subset $E \subset \cG$ satisfies the property that whenever it's conjugacy class $\Omega_{E} = \bigcup_{g} gEg^{-1}$ has finite measure then $E \subset \cG^{(0)}$. By the main theorem in \cite{BCDK24}, an ergodic discrete measured groupoid $\cG$ is icc if and only if the groupoid von Neumann algebra $L(\cG)$ is a factor.

\subsection{Equivalence relations and ergodic decomposition}
\label{Sec: equivalence relations and ergodic decomposition}

For detailed proofs of most of the facts stated in this section, we refer the reader to \cite{Feldman-Moore-1}. Let $\cR$ be a countable measured equivalence relation on a standard probability space $(X,\mu)$. Recall that such a countable measured equivalence relation is called \textit{ergodic} if every $\cR$-invariant Borel subset $E \subset X$ is either $\mu$-null or $\mu$-conull. 

Recall that a function $f \in L^{\infty}(X,\mu)$ is called $\cR$-\textit{invariant} if $f$ is constant on $\cR$-orbits $\mu$-almost everywhere. The space $L^{\infty}(X,\mu)^{\cR}$ of $\cR$-invariant functions is in fact an abelian von Neumann algebra, which is usually denoted by $L^{\infty}(X,\mu)^{\cR}$. It can be checked that $\cR$ is ergodic if and only $L^{\infty}(X,\mu)^{\cR} = \mathbb{C} \cdot 1$. 

Countable measured equivalence relations are broadly classified into three \textit{types}, analoguous to the classification of von Neumann algebras. Let $\cR$ be an ergodic equivalence relation on a standard probability space $(X,\mu)$. Then $\cR$ is said to be of \textit{type I$_{n}$} for $n \in \{1,2,...,\infty\}$ if $\cR$ is isomorphic to the complete equivalence relation on a set of $n$ elements. If $\cR$ is not necessarily ergodic, then $\cR$ is said to be of type I$_{n}$ if $\cR$ is isomorphic to an equivalence relation on $E \times S$, where $E$ is a standard Borel space and $S$ is a set of $n$ elements given by $(x,k) \sim (y,l)$ if and only $x = y$. An equivalence relation is said to be \textit{smooth} or \textit{discrete} or simply \textit{of type I} if $X$ admits a $\cR$-invariant Borel partition $X = \bigsqcup_{n} X_{n}$ such that $\cR|_{X_{n}}$ is of type I$_{n}$ for $n \in \{1,2,...,\infty\}$.

If $\cR$ is not of type $I$, then $\cR$ is said to be of \textit{type II$_{1}$} if there is a probability measure $\nu$ on $X$ which is equivalent to $\mu$ and $\cR$ is $\nu$-preserving, i.e., $\nu_{s} = \nu_{t}$ on $\cR$. If no such equivalent probability measure exists but there is a $\sigma$-finite infinite measure $\nu$ equivalent to $\mu$ such that $\cR$ is $\nu$-invariant, then $\cR$ is said to be of \textit{type II$_{\infty}$}. If there is no invariant finite or infinite measure on $X$ that makes $\cR$ invariant, then $\cR$ is said to be of \textit{type III}. If $\cR$ is a countable measured equivalence relation on $(X,\mu)$, then $X$ decomposes uniquely (up to null sets) into $\cR$-invariant Borel subsets $X_{\text{I}}$, $X_{\text{II}}$ and $X_{\text{III}}$ such that $\cR|_{X_{k}}$ is of type $k$ for $k \in \{\text{I, II, III}\}$. For the purposes of this article, we shall only be working with the decomposition of $\cR$ into its smooth (type I) and non-smooth (non-type I) part. We fix the following notation: 

\begin{notation}
\label{not: smooth and non smooth}
    Let $\cR$ be a countable measured equivalence relation on a standard probability space $(X,\mu)$. Then we shall denote by $X_{\s}$ and $X_{\nsi}$ the $\cR$-invariant Borel subsets of $X$ such that $\cR_{\s} \coloneqq \cR|_{X_{\s}}$ and $\cR_{\nsi} \coloneqq \cR|_{X_{\nsi}}$ are the smooth and non-smooth parts respectively. We will call such an equivalence relation \textit{smooth} if $X_{\s} = X$ and \textit{non-smooth} if $X_{\nsi} = X$ up to measure zero. For a discrete measured groupoid $\cG$ with unit space $(X,\mu)$ and associated equivalence relation $\cR$, we also denote by $\cG_{\s}$ and $\cG_{\nsi}$ the Borel partition of $\cG$ into $\cG|_{X_{s}}$ and $\cG|_{X_{\nsi}}$.  
\end{notation}

Type I equivalence relations are precisely the ones which admit a so-called \textit{fundamental domain}. More precisely we have the following characterization. One can look at standard texts or lecture notes on Borel equivalence relations for a proof, for example see \cite[Chapter 2]{CalderoniLectureNotes09} 

\begin{theorem}
    Let $\cR$ be a countable measured equivalence relation on a standard probability space $(X,\mu)$. Then the following are equivalent: 
    \begin{enumerate}
        \item $\cR$ is of type I,
        \item $\cR$ admits a Borel selector, i.e. a Borel map $f: X \rightarrow X$ satisfying $(x,f(x)) \in \cR$ and $(x,y) \in \cR \implies f(x)=f(y)$ for a.e. $x,y \in X$, 
        \item The space of orbits $X/\cR$ is a standard Borel space.
    \end{enumerate}
\end{theorem}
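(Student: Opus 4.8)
The plan is to prove the cycle $(1)\Rightarrow(2)\Rightarrow(3)\Rightarrow(1)$; only the last implication requires real work, and throughout one reads every statement modulo $\mu$-null sets, the passages between ``almost everywhere'' and ``genuine'' objects using only that $\cR$-saturations of null sets are null (non-singularity of $\cR$).

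For $(1)\Rightarrow(2)$ I would write $X=\bigsqcup_n X_n$ with a nonsingular Borel isomorphism $\psi_n$ carrying $\cR|_{X_n}$ onto the product relation on $E_n\times S_n$, take the obvious Borel selector $(e,k)\mapsto(e,s_0)$ on the product, pull it back through $\psi_n$, and glue over $n$. For $(2)\Rightarrow(3)$, after discarding a null $\cR$-invariant Borel set I may assume $f$ is a genuine Borel selector; then $Y\coloneqq\{x:f(x)=x\}$ is a Borel transversal meeting each $\cR$-orbit exactly once, so the quotient map restricts to a bijection $\pi|_Y\colon Y\to X/\cR$. From the identity $\pi^{-1}(A)=f^{-1}(\pi^{-1}(A)\cap Y)$ it follows that $\pi^{-1}(A)$ is Borel in $X$ iff $\pi^{-1}(A)\cap Y$ is Borel in $Y$, so $\pi|_Y$ identifies the quotient Borel structure on $X/\cR$ with the standard Borel structure of $Y$, and $X/\cR$ is standard Borel.

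The substance is $(3)\Rightarrow(1)$. Since $X/\cR$ is standard Borel it admits a countable separating family $\{A_n\}$ of Borel sets; with $B_n\coloneqq\pi^{-1}(A_n)$, the map $g\coloneqq(\mathbf{1}_{B_n})_n\colon X\to2^{\mathbb N}$ is Borel, the $B_n$ are $\cR$-invariant, and separation gives $\cR=\{(x,y):g(x)=g(y)\}$, so the fibres of $g$ are exactly the $\cR$-orbits. I would then apply Lusin--Novikov uniformization (\cite[Theorem 18.10]{Kec95}) to the graph of $g$ in $X\times2^{\mathbb N}$: its image $g(X)$ is Borel and the graph is a countable disjoint union of graphs of Borel partial functions $h_k$, from which one builds, for each $c\in g(X)$, an injective enumeration $e_1(c),e_2(c),\dots$ of $g^{-1}(c)$ that is Borel in $c$, by repeatedly choosing the value of least index not yet listed. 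Stratifying $g(X)=\bigsqcup_n F_n$ by the (Borel) fibre cardinality $n(c)=|g^{-1}(c)|$ and setting $X_n\coloneqq g^{-1}(F_n)$ yields an $\cR$-invariant Borel partition of $X$; the map $x\mapsto(g(x),j)$ with $e_j(g(x))=x$ is a nonsingular Borel isomorphism of $X_n$ onto $F_n\times S_n$ (with $|S_n|=n$) taking $\cR|_{X_n}$ to the product relation, whence $\cR|_{X_n}$ is of type $\mathrm{I}_n$ and $\cR$ is of type $\mathrm{I}$.

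The main obstacle will be the bookkeeping in this last paragraph: arranging the fibre enumeration to be Borel in $c$ and, crucially, verifying that it exhausts each fibre --- which holds because if some value $h_m(c)$ were never listed then the ``least unlisted index'' would stay $\le m$ at every stage, forcing $|g^{-1}(c)|\le m$. Everything else is soft: Lusin--Novikov (already invoked in the paper) supplies both the Borelness of $g(X)$ and the partial selection functions, and the Feldman--Moore framework of \cite{Feldman-Moore-1} guarantees $\cR$ is a countable Borel equivalence relation to begin with.
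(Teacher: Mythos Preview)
The paper does not actually prove this theorem: it is stated as background and the reader is referred to standard sources (specifically \cite[Chapter~2]{CalderoniLectureNotes09}), so there is no ``paper's own proof'' to compare against. Your cycle $(1)\Rightarrow(2)\Rightarrow(3)\Rightarrow(1)$ is the standard argument and is correct; the only place worth tightening is the exhaustion claim in the Lusin--Novikov step, where your phrasing ``forcing $|g^{-1}(c)|\le m$'' is slightly imprecise --- the clean way to say it is that if $h_m(c)$ were never listed then every $k_j\le m$, so the infinitely many distinct values $e_j(c)$ would all lie in the finite set $\{h_1(c),\dots,h_m(c)\}$, a contradiction.
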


In Point 2 of the previous theorem, the Borel set $f(X)$ is called a \textit{fundamental domain} for the equivalence relation $\cR$.

A discrete measured groupoid $\cG$ is called \textit{ergodic} if the associated countable measured equivalence relation is ergodic. Suppose now that $(Z,\eta)$ is a standard probability space and $(\cG_{z})_{z \in Z}$ is a measurable field of discrete measured groupoids on a standard measurable space $Z_{0}$ with the canonical projection map $\pi: \bigsqcup_{z \in Z} G_{z} \rightarrow Z$, quasi invariant with respect to the measurable field of Borel probability measures $(\mu_{z})_{z \in Z}$. Let $\mu$ be the integral of the Borel probability measures $(\mu_{z})_{z \in Z}$ with respect to $\eta$. Then there is a natural standard Borel structure on the set $\{(g,\pi(g)) \; | \; g \in \bigsqcup_{z \in Z} G_{z} \rightarrow Z \}$ which makes it a discrete measured groupoid with unit space $(Z_{0} \times Z, \mu)$. The source and target maps are defined as $s(g,\pi(g)) = (s(g),\pi(g))$ and $t(g,\pi(g)) = (t(g),\pi(g))$. Similarly the inverse and composition maps are defined as $(g, \pi(g))^{-1} = (g^{-1}, \pi(g))$ and $(g,\pi(g))(h, \pi(h)) = (gh, \pi(gh))$ whenever $\pi(g) = \pi(h)$ and $s(g) = t(h)$. This discrete measured groupoid is called the \textit{direct integral} of the measurable field $(\cG_{z})_{z \in Z}$ and we shall denote this by $\cG = \int^{\oplus}_{z \in Z} \cG_{z}$.

By the ergodic decomposition theorem (\cite[Theorem 6.1]{Hahn78}), every discrete measured groupoid is isomorphic to a direct integral of ergodic discrete measured groupoids. More precisely, given a discrete measured groupoid $\cG$, let $\cG = \cG_{\s} \bigsqcup \cG_{\nsi}$ be its decomposition into its smooth and non-smooth parts. Then $\cG_{\nsi}$ is isomorphic to a direct integral of ergodic non-smooth groupoids over some standard Borel space $(Y_{\text{ns}})$ and $\cG_{\s}$ admits a decomposition $\cG_{\s} = \bigsqcup_{n \in \mathbb{N}} \cG_{\s}^{n}$, where $\cG_{\s}^{n}$ has associated equivalence relation of type I$_n$ and $\cG_{\s}^{n}$ is a direct integral of ergodic discrete measured groupoids of type I$_{n}$ over standard Borel spaces $Y_n$. The ergodic decomposition of discrete measured groupoids is furthermore unique up to Borel isomorphisms of the standard Borel spaces $Y_n$ and $Y_{\text{ns}}$. For more details we refer the reader to \cite[Theorem 6.1]{Hahn78}. This motivates the following main definition of this article. 

\begin{definition}
    \label{Def: genuine groupoids}
    Let $\cG$ be an ergodic discrete measured groupoid. Then $\cG$ is called \textit{genuine} if $\cG$ is not isomorphic to an equivalence relation or a transformation groupoid. 

    More generally let $\cG$ be a discrete measured groupoid and let $(Z,\eta)$ be a standard probability space such that $(\cG_{z})_{z \in Z}$ is the ergodic decomposition of $\cG$. Then $\cG$ is called \textit{genuine} if there is a non-null Borel subset $E \subset Z$ such that $\cG_{z}$ is genuine for all $z \in E$. 
\end{definition}

\subsection{Semi-direct product groupoids}

Suppose that we have a countable measured equivalence relation $\cR$ on a standard probability space $(X,\mu)$ and suppose $G = (G_{x})_{x \in X}$ is a measured field of discrete countable groups. Then an action of $\cR$ on $G$ is given by a family of group isomorphisms $\delta_{(y,x)}: G_{x} \rightarrow G_{y}$ for all $(y,x) \in \cR$ such that outside a null set we have: 
\begin{enumerate}
    \item $
    \delta_{(z,y)} \circ \delta_{(y,x)} = \delta_{(z,x)}$.
    \item $
    \delta_{(y,x)}^{-1} = \delta_{(x,y)}$
    \item $\delta_{(x,x)} = \id_{x}$
\end{enumerate}

Now the semi-direct product groupoid $\cG = G \rtimes \cR$ consists of elements of the form $(g, (y,x))$ for all $g \in G_{x}$ and $(y,x) \in \cR$. One should think about these elements as the `arrows' from $x$ to $y$, hence we have $s(g,(y,x)) = x$ and $t(g,(y,x)) = y$. The Borel structure in $\cG$ is the product of the Borel structures in $G$ and $\cR$. The multiplication and inverse maps are defined as follows: 
    \begin{align*}
        (h,(z,y)) \circ (g,(y,x)) &= (\delta_{(x,y)}(h)\cdot g, (z,x)) \\ (g,(y,x))^{-1} &= (\delta_{(y,x)}(g^{-1}),(x,y))
    \end{align*}
Since $\cR$ is countable and the projection map $\cG \rightarrow \cR$ is Borel, we have that the source and target maps are countable-to-one and Borel. Similarly, $\cG$ satisfies the non-singularity assumption because the equivalence relation $\cR$ is non-singular. 

It was shown in \cite[Proposition 6.5]{Popa-Shlyakhtenko-Vaes20} that for every so-called treeable equivalence relation $\cR$, any discrete measured groupoid with associated equivalence relation $\cR$ can be written as such a semi-direct product. In particular, this is the case if $\cR$ is a hyperfinite equivalence relation, i.e., one that can be written as a countable union of equivalence relations with finite orbits. For our purposes, we only need the fact that if $\cR$ is a type I equivalence relation (and hence hyperfinite), then any discrete measured groupoid $\cG$ with associated equivalence relation $\cR$ can be written as such a semi-direct product. 

\subsection{B. H. Neumann's construction}
\label{Sec: Neumann's construction}


Recall that a discrete countable group $G$ is called \textit{$n$-generated} for a positive integer $n$ if $G$ can be generated by $n$ elements as a group. B.H. Neumann constructed an uncountable family of discrete countable groups that are mutually non-isomorphic and are all 2-generated in \cite{Neumann37}. For a reference, we direct the reader to \cite[Section III.B]{delaHarpe2000}. We recall the construction here. For all integers $n \geq 1$, let $A_{n}$ denote the alternating group on $n$ elements. Of course the number of elements in $A_{n}$ is $\frac{n!}{2}$ for all $n \geq 1$. It is a standard fact from group theory (for example see \cite[Chapter IV.6]{DummitFoote04}) that for $n \geq 5$, the alternating group $A_{n}$ is simple. 

Recall that for a finite set of elements $a_{i}$ for $1 \leq i \leq n$, the \textit{cycle} $(a_{1},a_{2},...,a_{n})$ denotes the permutation that sends $a_{i}$ to $a_{i+1}$ for all $i < n$ and sends $a_{n}$ to $a_{1}$. Now let $U = \{u_{1},u_{2},...\}$ be a strictly increasing infinite sequence of odd integers with $u_1 \geq 5$. Let $X_{U}$ be a countable set whose elements are labeled as $x_{j,k}$ where $j \geq 1$  and $1 < k \leq u_{j}$. Define two permutation $\alpha_{U}$ and $\beta_{U}$ of the elements of $X_{U}$ given by: 
\begin{align*}
    \alpha_{U} &= (x_{1,1},x_{1,2},...,x_{1,u_{1}})(x_{2,1},x_{2,2},...,x_{2,u_{2}})(x_{3,1},x_{3,2},...,x_{3,u_{3}})... \\
    \beta_{U} &= (x_{1,1},x_{1,2},x_{1,3})(x_{2,1},x_{2,2},x_{2,3})(x_{3,1},x_{3,2},x_{3,3})...
\end{align*}

Notice that $\alpha_{U}$ is a permutation of infinite order but $\beta_{U}$ has order 3. Let $\G_{U}$ be the group of permutations of $X_{U}$ generated by $\alpha$ and $\beta$. By analyzing finite normal subgroups of $\G_{U}$, it turns out that for all $i \geq 1$, $\G_{U}$ has a finite normal subgroup isomorphic to $A_{u_{i}}$. Moreover, every finite normal subgroup of $\G_{U}$ is isomorphic to $A_{u_{i}}$ for some $i \geq 1$. For a detailed proof of these facts, we refer the reader to \cite[Points 26 and 27, Chapter III.B]{delaHarpe2000}. As an immediate corollary, we have the following proposition. 

\begin{proposition} \textup{(\cite[Chapter III.B, Proposition 28]{delaHarpe2000})}
    \label{Prop: uncountable family of non isomorphic groups}
    For two distinct sequences $U$ and $V$ consisting of strictly increasing odd integers $\geq 5$, the 2-generated groups $\G_{U}$ and $\G_{V}$ as constructed above are non-isomorphic. 
\end{proposition}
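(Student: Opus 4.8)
The plan is to exploit the structural facts about finite normal subgroups recorded just above the statement: for each $i \geq 1$ the group $\G_U$ has a finite normal subgroup isomorphic to $A_{u_i}$, and conversely every finite normal subgroup of $\G_U$ is isomorphic to $A_{u_i}$ for some $i \geq 1$ (and likewise for $\G_V$). The key idea is that the set of isomorphism classes of finite normal subgroups is an abstract-group-isomorphism invariant, and from this set one can recover the whole sequence $U$.

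Concretely, suppose toward a contradiction that $\phi \colon \G_U \to \G_V$ is a group isomorphism for distinct sequences $U = \{u_1 < u_2 < \cdots\}$ and $V = \{v_1 < v_2 < \cdots\}$ of odd integers $\geq 5$. First I would note that $\phi$ carries finite normal subgroups of $\G_U$ to finite normal subgroups of $\G_V$ and $\phi\inv$ does the reverse, each preserving the isomorphism type of the subgroup; hence the set $\{[N] : N \text{ a finite normal subgroup of } \G_U\}$ of isomorphism classes equals the corresponding set for $\G_V$. By the recalled facts, these two sets are exactly $\{[A_{u_i}] : i \geq 1\}$ and $\{[A_{v_j}] : j \geq 1\}$. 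Next I would use that for integers $n, m \geq 5$ one has $A_n \cong A_m$ if and only if $n = m$ — indeed already $|A_n| = n!/2$ determines $n$ — so the equality of these sets of isomorphism classes forces $\{u_i : i \geq 1\} = \{v_j : j \geq 1\}$ as subsets of $\N$. Since by definition $U$ and $V$ are the strictly increasing enumerations of their underlying sets, this yields $U = V$, contradicting the hypothesis; therefore no such $\phi$ exists.

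There is essentially no obstacle here once the classification of finite normal subgroups of $\G_U$ from \cite[Points 26 and 27, Chapter III.B]{delaHarpe2000} is granted — this is why the statement is phrased as an immediate corollary. The only mild points to keep straight are that the transported invariant is the \emph{set of isomorphism classes} of finite normal subgroups (not the subgroups themselves, and not the isotypic multiplicities, which we do not need), and that distinct alternating groups of degree $\geq 5$ are genuinely non-isomorphic, which follows at once from cardinality. An alternative route, if one wished to avoid invoking the full classification, would be to observe directly that $A_n$ occurs as a normal subgroup of $\G_U$ precisely when $n \in U$; but since both directions are already available above, the argument just described is the cleanest.
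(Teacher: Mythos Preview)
Your proposal is correct and is exactly the argument the paper intends: the paper does not spell out a proof at all but simply declares the proposition an ``immediate corollary'' of the quoted classification of finite normal subgroups of $\G_U$, and what you have written is precisely that immediate deduction. There is nothing to add or compare.
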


This gives an uncountable family of 2-generated groups that are mutually non-isomorphic. From an ergodic theoretic view point, this construction can be done entirely in a `Borel manner' and we shall see in the next section that it in fact forms a Borel field of countable discrete groups.

\section{A Borel field of pairwise non-isomorphic groups}

Let $\cY = \N^\N$ be the Baire space with the product topology. It is well known that $\cY$ is a Polish space and hence has a standard Borel structure. It is also well known that there is a continuous order-preserving bijection from $\cY$ to $(0,\infty)$. Hence, with respect to the Borel $\sigma$-algebras on both sides, this bijection is Borel.  We shall denote sequences in $\N^\N$ by $U = (u_{n})_{n \in \N}$. For positive integers $k,l$, consider the subset $\cY_{k,l} \subset \cY$ given by $\{U \in \cY \; | \; u_{k} = l\}$, i.e., all sequences whose $k^{\text{th}}$ entry is $l$. Also consider the subset $\cY_{0} \subset \cY$ consisting of all strictly increasing sequences of odd integers $\geq 5$.

\begin{lemma}
    \label{Lemma: Borel subsets of Y}
    The subsets $\cY_0$ and $\cY_{k,l}$ for all positive integers $k,l \in \N$ are Borel subsets of $\cY$. 
\end{lemma}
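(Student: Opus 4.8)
The plan is to exhibit each of these sets as a Borel subset of the Baire space $\cY = \N^\N$ by writing it explicitly in terms of the coordinate projection maps $\pi_n : \cY \to \N$, $\pi_n(U) = u_n$, each of which is continuous (hence Borel) by definition of the product topology.

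First I would handle the sets $\cY_{k,l}$. For fixed $k,l$ we simply have $\cY_{k,l} = \pi_k^{-1}(\{l\})$. Since $\{l\}$ is a (clopen) subset of $\N$ with the discrete topology and $\pi_k$ is continuous, $\cY_{k,l}$ is clopen, in particular Borel. (Concretely, it is the basic open cylinder determined by fixing the $k$-th coordinate to be $l$.)

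Next I would treat $\cY_0$, the set of strictly increasing sequences of odd integers that are all $\geq 5$. The point is that this is a \emph{countable} intersection of Borel conditions, each of which only involves finitely many coordinates. Specifically, write
\begin{align*}
    \cY_0 = \left(\bigcap_{n \in \N} \{U \in \cY \;|\; u_n \text{ is odd}\}\right) \cap \{U \in \cY \;|\; u_1 \geq 5\} \cap \left(\bigcap_{n \in \N} \{U \in \cY \;|\; u_n < u_{n+1}\}\right).
\end{align*}
Each set $\{u_n \text{ odd}\} = \pi_n^{-1}(2\N+1)$ is open (preimage of a subset of the discrete space $\N$ under the continuous map $\pi_n$); the set $\{u_1 \geq 5\} = \pi_1^{-1}(\{5,6,7,\dots\})$ is likewise open; and each set $\{u_n < u_{n+1}\} = \bigcup_{a < b}\left(\pi_n^{-1}(\{a\}) \cap \pi_{n+1}^{-1}(\{b\})\right)$ is open, being a union of basic cylinders. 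A countable intersection of open sets is a $G_\delta$ set, hence Borel, so $\cY_0$ is Borel. (One may note that the strict-increase condition forces $u_n \to \infty$, so the condition $u_n \geq 5$ for all $n$ is equivalent to $u_1 \geq 5$; either formulation works.)

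There is really no serious obstacle here: the only thing to be slightly careful about is bookkeeping — making sure every defining condition for $\cY_0$ is phrased so that it depends Borel-measurably (in fact continuously) on only finitely many coordinates, and then invoking closure of the Borel $\sigma$-algebra under countable intersections. The mild subtlety, if any, is observing that ``all entries are odd and $\geq 5$'' need not be imposed coordinate-by-coordinate for the $\geq 5$ part once strict monotonicity is in force, but imposing it everywhere does no harm and keeps the description transparent.
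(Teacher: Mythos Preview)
Your proof is correct and follows essentially the same approach as the paper: both arguments use the coordinate projections $\pi_n$ and express $\cY_{k,l}$ and $\cY_0$ in terms of preimages of subsets of $\N$ (or $\N\times\N$). The only cosmetic difference is that the paper builds $\cY_0$ by successively removing bad sets (sequences with an even entry, then non-strictly-increasing ones, then those with $u_1\in\{1,3\}$), whereas you write $\cY_0$ directly as a countable intersection of open cylinder conditions; the underlying idea is identical.
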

\begin{proof}
    For a positive integer $i$, let $\pi_i : \cY \rightarrow \N$ be the map $U \mapsto u_{i}$. By definition the product topology on $\cY$ is the weakest topology for which $\pi_i$ is continuous for all $i$. Since $\cY_{k,l} = \pi_{k}^{-1}(\{l\})$, it is an open set and hence clearly Borel. Now let $\N_{\even}$ be the set of even positive integers. Clearly $E = \bigcup_{i \in \N}\pi_i^{-1}({\N_{\even}})$ is a countable union of open sets and is open and Borel. Then $E^{C} = \cY \backslash E$ is also Borel (note that $E^C$ consists of all sequences with only odd integers). For all pairs $i,j \in \mathbb{N}$ with $i \neq j$, we have that $(\pi_i,\pi_j): E^C \rightarrow \N \times \N$ is continuous and hence Borel. Let $F_{i}$ be the subset of $E^{C}$ consisting of sequences $(u_{n})_{n \in \mathbb{N}}$ such that $u_{i} \geq u_{i+1}$. Then $F_{i}$ is Borel because it is the inverse image under $(\pi_i, \pi_{i+1})$ of the Borel set $\{(m,n) \; | \;  m \geq n\}$ in $\mathbb{N} \times \mathbb{N}$. Then we notice that $E^{C} \backslash \bigcup_{i \geq 1} F_{i}$ consisting of all strictly increasing sequences of odd integers is a Borel set. Now we can subtract the Borel sets $\pi_{1}^{-1}(\{3\})$ and $\pi_{1}^{-1}(\{1\})$ to get that $\cY_{0}$ is indeed a Borel set. 
\end{proof}

\begin{notation}
    For the rest of this article, we shall denote by $Y$ the Borel set $\cY_{0}$ from Lemma \ref{Lemma: Borel subsets of Y}, i.e., the standard Borel structure on the set of all strictly increasing sequences with odd integers $\geq 5$ and we denote by $Y_{k,l}$ be the Borel subset $\cY_{k,l} \cap \cY_{0}$ from Lemma \ref{Lemma: Borel subsets of Y}, i.e., all sequences in $\cY_0$ such that the $k^{\text{th}}$ entry is $l$ for all $k \in \N$ and $l \in \N_{\odd}$.
\end{notation}

For all $U \in Y$, let $X_{U}$ be the countable set defined by B.H. Neumann as in Section \ref{Sec: Neumann's construction} and let $X = \bigsqcup_{U \in Y} X_{U}$. Let $\pi: X \rightarrow Y$ be the canonical countable-to-one projection map. For all positive integers $i,j$, let $\theta_{i,j}: Y \rightarrow X$ be the section given by: 
\begin{align*}
    U \mapsto \begin{cases}
        x_{i,j}   & \text{ if } j \leq u_i \\ x_{i,u_{i}} & \text{ if } j > u_{i}
     \end{cases}
\end{align*}

One can check that the sections $\theta_{i,j}$ have the property that for all $U \in Y$, the set $\{\theta_{i,j}(U) \; | \; i,j \in \N\}$ equals $X_{U}$. Also for two tuples $(i,j) \neq (k,l)$ of positive integers, one can check that the maps $Y \rightarrow [0,\infty)$ given by $U \mapsto d_{u}(\theta_{i,j}(U), \theta_{k,l}(U))$ where $d_{u}$ is the metric that gives the discrete topology, are all Borel, essentially because the maps $\pi_{i}$ are Borel. Then there is a unique standard Borel structure on $X$ that makes the map $\pi: X \rightarrow Y$ and the countable family of sections $\theta_{i,j}$ Borel, for example using \cite[Proposition 4.2]{WoutersVaes24}.   

\begin{lemma}
\label{Def: alpha and beta are borel sections}
    Let $X = \bigcup_{U \in Y} X_{U}$ be the standard Borel space as above. Consider the permutations $\alpha_{U}$ and $\beta_{U}$ of $X_{U}$ as above. Then the disjoint unions $\alpha: X \rightarrow X$ and $\beta: X \rightarrow X$ are Borel.  
\end{lemma}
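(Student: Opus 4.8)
The plan is to reduce the statement to checking, for each pair $(k,l)$, that two explicitly described maps $Y\to X$ are Borel. Recall that the standard Borel structure on $X$ is set up so that a subset $A\subseteq X$ is Borel if and only if $\theta_{i,j}^{-1}(A)$ is Borel in $Y$ for every pair $(i,j)$ (this is the fibred Borel structure built from the sections $\theta_{i,j}$, cf.\ \cite[Proposition 4.2]{WoutersVaes24}). Consequently, for $f=\alpha$ or $f=\beta$ the set $f^{-1}(A)$ is Borel precisely when $(f\circ\theta_{i,j})^{-1}(A)$ is Borel for all $(i,j)$, so it suffices to show that $\alpha\circ\theta_{k,l}$ and $\beta\circ\theta_{k,l}$ are Borel maps $Y\to X$ for every $(k,l)$. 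For this I will exhibit a countable Borel partition of $Y$ on each piece of which the composition coincides with one of the (already Borel) sections $\theta_{i,j}$, and use that a map which is Borel on each piece of a countable Borel partition of its domain is Borel.

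First I would treat $\beta$, which needs no partition at all. Using $\theta_{k,l}(U)=x_{k,\min(l,u_k)}$ together with the fact that $u_k\ge 5$ for every $U\in Y$ — so that the three points $x_{k,1},x_{k,2},x_{k,3}$ are always present and are untouched by the truncation in the definition of $\theta_{k,l}$ — the $3$-cycle $(x_{k,1},x_{k,2},x_{k,3})$ defining $\beta_U$ gives, for all $U\in Y$, that $\beta_U(\theta_{k,l}(U))$ equals $\theta_{k,2}(U)$ if $l=1$, equals $\theta_{k,3}(U)$ if $l=2$, equals $\theta_{k,1}(U)$ if $l=3$, and equals $\theta_{k,l}(U)$ if $l\ge 4$. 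Thus $\beta\circ\theta_{k,l}$ is literally one of the sections $\theta_{i,j}$ and is therefore Borel.

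Next I would treat $\alpha$. A short case distinction according to whether $l<u_k$ or $l\ge u_k$ shows that, for every $U\in Y$,
\[
\alpha_U(\theta_{k,l}(U))=\begin{cases}\theta_{k,l+1}(U)&\text{if }u_k>l,\\ \theta_{k,1}(U)&\text{if }u_k\le l,\end{cases}
\]
where in the first case one uses $l+1\le u_k$, so that indeed $\theta_{k,l+1}(U)=x_{k,l+1}$. The set $Y'\coloneqq\{U\in Y\mid u_k>l\}$ is Borel, being the preimage of $\{n\in\N\mid n>l\}$ under the Borel coordinate map $U\mapsto u_k$, and so is its complement $\{U\in Y\mid u_k\le l\}$. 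On $Y'$ the map $\alpha\circ\theta_{k,l}$ agrees with the Borel section $\theta_{k,l+1}$, and on the complement it agrees with $\theta_{k,1}$; hence $\alpha\circ\theta_{k,l}$ is Borel. Combined with the reduction in the first paragraph, this proves that both $\alpha$ and $\beta$ are Borel.

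The argument is essentially routine; the only point deserving genuine attention is the bookkeeping forced by the truncation $\theta_{k,l}(U)=x_{k,\min(l,u_k)}$ — one has to make sure that every index produced (namely $l+1$ on $Y'$, and $1,2,3$ in the $\beta$ computation) really lies in $\{1,\dots,u_k\}$, which holds because $u_k>l$ on $Y'$ and because $u_k\ge 5$ everywhere on $Y$. I do not anticipate any substantive obstacle beyond this care with the casework.
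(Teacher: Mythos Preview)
Your proof is correct and follows essentially the same approach as the paper's: both reduce Borelness of $\alpha$ and $\beta$ to checking that the compositions $\alpha\circ\theta_{k,l}$ and $\beta\circ\theta_{k,l}$ are Borel sections, and both verify this by identifying these compositions with other $\theta$-sections. The only organizational difference is that the paper first partitions $X$ into the pieces $X^{i,j}=\pi^{-1}(Y_{i,j})\cap X^{i}$ (on which $\alpha$ permutes a \emph{finite} total family of sections cyclically, so no case split is needed), whereas you work globally over $Y$ and absorb the same information into the two-piece Borel partition $\{u_k>l\}\sqcup\{u_k\le l\}$; this is a cosmetic difference, not a substantive one.
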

\begin{proof}
    For $U \in Y$, let $X^{i}_{U} = \{x_{i,1},x_{i,2},...,x_{i,u_{i}}\}$ and let $X^{i} = \bigsqcup_{U \in Y} X^{i}_{U}$. We have that $X^{i}$ is a Borel subset of $X$ for all $i$ because $X^{i} = \bigcup_{j} \theta_{i,j}(Y)$. Now denote by $X^{i,j}$ the Borel subset $\pi^{-1}(Y_{i,j})$ of $X^{i}$. Notice that it is enough to prove that $\alpha|_{X^{i,j}}: X^{i,j} \rightarrow X^{i,j}$ is Borel for all $i,j \in \mathbb{N}$. We know that $\{\theta_{i,k} \; | \; 1 \leq k \leq j\}$ is a total set of Borel sections on $Y_{i,j}$. To check that $\alpha|_{X^{i,j}}$ is Borel, it is enough to prove that $\alpha \circ \theta_{i,k}$ is a Borel section for all $1 \leq k \leq j$ by \cite[Lemma A.2.6]{Lisethesis}. But this holds because restricted to $X^{i,j}$, one has that $\alpha \circ \theta_{i,k} = \theta_{i,k+1}$ if $k \neq j$ and $\alpha \circ \theta_{i,j} = \theta_{i,1}$. Since $\{X^{i,j} \; | \; i,j \in \mathbb{N}\}$ forms a countable Borel partition of $X$, we have that $\alpha: X \rightarrow X$ is a countable disjoint union of Borel maps and is hence Borel.   

    Similarly one can consider the subsets $W^{i}_{U} = \{x_{i,1},x_{i,2},x_{i,3}\}$ and let $W^{i}$  be the disjoint union $\bigsqcup_{Y \in U} W^{i}_{U}$. Exactly as in the previous paragraph, $W^{i}$ is a Borel subset of $X$ for all $i \in \N$. One has that restricted to $W^{i}$, the set $\{\theta_{i,j} \; | \; 1 \leq j \leq 3\}$ is a total set of Borel sections, and clearly $\beta \circ \theta_{i,j}$ is again Borel. Once again by \cite[Lemma A.2.6]{Lisethesis}, we have that $\beta: X \rightarrow X$ is Borel. 
\end{proof}

For two Borel fields of separable structures $H = (H_{z})_{z \in Z}$ and $K = (K_{z})_{z \in Z}$ and the canonical projections $\pi_{H}: H \rightarrow Z$ and $\pi_{K} : K \rightarrow Z$, we denote by $H \times_{\pi} K$ the Borel set $\{(h,k) \in H \times K \; | \; \pi_{H}(h) = \pi_{K}(k)\}$. The proof of the following corollary relies heavily on the results obtained in \cite{WoutersVaes24}. 

\begin{corollary}
    \label{Corr: G_U forms a Borel field of countable discrete groups}
    The family of countable discrete groups $\G = (\G_{U})_{U \in Y}$ is a Borel field of discrete countable groups. 
\end{corollary}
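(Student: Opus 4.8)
The plan is to realise $\G := \bigsqcup_{U \in Y}\G_U$ as a Borel transversal inside $F_2 \times Y$, where $F_2 = \langle a,b \rangle$ is the free group on two generators, and then to transport the group operations across this identification. First I would observe that $\alpha$ and $\beta$ are fibre-preserving Borel bijections of the standard Borel space $X$ (fibre-preserving since $\pi\circ\alpha=\pi\circ\beta=\pi$, bijective since they are fibrewise permutations, Borel by Lemma \ref{Def: alpha and beta are borel sections}), hence $\alpha^{-1}$ and $\beta^{-1}$ are Borel as well, being Borel inverses of Borel bijections of a standard Borel space. For a reduced word $w = s_1 s_2 \cdots s_k$ with $s_i \in \{a^{\pm 1}, b^{\pm 1}\}$, set $\hat w := \hat s_1 \circ \cdots \circ \hat s_k : X \to X$ where $\hat a = \alpha$, $\hat b = \beta$; then $\hat w$ is a fibre-preserving Borel bijection whose restriction to $X_U$ is exactly the value $q_U(w) \in \G_U$ of the surjective evaluation homomorphism $q_U : F_2 \twoheadrightarrow \G_U$ (surjectivity being exactly the fact that $\alpha_U,\beta_U$ generate $\G_U$).

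Next, for $v, w \in F_2$ I would show that the set $R_{v,w} := \{U \in Y : q_U(v) = q_U(w)\}$ is Borel. Indeed $q_U(v) = q_U(w)$ holds precisely when $\hat v$ and $\hat w$ agree on $X_U = \{\theta_{i,j}(U) : i,j \geq 1\}$, so
\[
R_{v,w} = \bigcap_{i,j \geq 1} \big\{U \in Y : \hat v(\theta_{i,j}(U)) = \hat w(\theta_{i,j}(U)) \big\},
\]
and each set in this countable intersection is the preimage of the (Borel) diagonal of $X \times X$ under the Borel map $U \mapsto (\hat v(\theta_{i,j}(U)), \hat w(\theta_{i,j}(U)))$. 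Now fix an enumeration $F_2 = \{w_1, w_2, \dots\}$ with $w_1$ the empty word and put
\[
T := \bigsqcup_{n \geq 1} \{w_n\} \times \Big(Y \setminus \bigcup_{m < n} R_{w_m, w_n}\Big) \subseteq F_2 \times Y ,
\]
which is a Borel subset of the standard Borel space $F_2 \times Y$. The map $(w_n, U) \mapsto (q_U(w_n), U)$ is then a bijection $T \to \G$: surjectivity is surjectivity of the $q_U$, and injectivity holds because for $(w_n, U) \in T$ the word $w_n$ is, by construction of $T$, the least-indexed word representing $q_U(w_n)$. Transport the standard Borel structure of $T$ to $\G$ along this bijection.

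Finally I would verify the axioms of Definition \ref{Def: measured field of countable discrete groups}. Under the identification $\G \cong T$, the projection $\G \to Y$ is a coordinate projection, and the identity section is $U \mapsto (w_1, U)$ (note $(w_1,U)\in T$ for every $U$), both evidently Borel. Write $T = \bigsqcup_{w \in F_2} \{w\} \times S_w$ with each $S_w \subseteq Y$ Borel, and let $r(v, U) \in F_2$ denote the least-indexed word with $q_U(r(v,U)) = q_U(v)$; then $\{U : r(v,U) = w_n\} = R_{v, w_n} \setminus \bigcup_{m < n} R_{v, w_m}$ is Borel, so $U \mapsto r(v, U)$ is Borel for each fixed $v \in F_2$. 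Consequently, on each piece of the partition the inverse map $(w, U) \mapsto (r(w^{-1}, U), U)$ and the multiplication map $((v, U), (w, U)) \mapsto (r(vw, U), U)$ are Borel, and since the pieces form a countable Borel partition of $\G$ (resp.\ of $\G \times_{\pi} \G$), both the inverse $\G \to \G$ and the multiplication $\G \times_{\pi} \G \to \G$ are Borel. The one genuinely delicate point is the Borelness of the sets $R_{v,w}$, which is precisely where the Borel field structure on $X$ and Lemma \ref{Def: alpha and beta are borel sections} enter; everything after that is a routine transversal argument, and one could alternatively quote a general statement on fibrewise-generated Borel fields of permutation groups of the kind already cited in the construction of $X$.
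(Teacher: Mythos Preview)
Your argument is correct, and it takes a genuinely different route from the paper's. The paper embeds $\G$ \emph{injectively} into $Y \times X^{\N \times \N}$ by sending a permutation $g \in \G_U$ to the sequence $(g\cdot\theta_{i,j}(U))_{i,j}$, observes that the image is a countable union of graphs of Borel maps (one for each word in $\alpha^{\pm 1},\beta^{\pm 1}$), and then invokes general lemmas from \cite{WoutersVaes24} to obtain the standard Borel structure and to conclude that the structure maps and the word-sections are Borel. You instead surject $F_2 \times Y$ onto $\G$ via the evaluation homomorphisms $q_U$, prove that the fibrewise equality sets $R_{v,w}$ are Borel (this is where Lemma~\ref{Def: alpha and beta are borel sections} and the sections $\theta_{i,j}$ enter, exactly as in the paper), carve out a Borel transversal $T$ by least-index selection, and then verify the axioms of Definition~\ref{Def: measured field of countable discrete groups} by hand on the countable Borel partition $T = \bigsqcup_w \{w\}\times S_w$. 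Your approach is more elementary and entirely self-contained---no appeal to external structure theorems---at the cost of a slightly longer bookkeeping argument; the paper's approach is shorter and more conceptual, and it has the pleasant feature that the Borel structure is characterised intrinsically by the requirement that the tautological action $\G \times_\pi X \to X$ be Borel. The two constructions yield the same Borel structure: your map $(w,U)\mapsto (U,(\hat w(\theta_{i,j}(U)))_{i,j})$ is Borel on each piece $\{w\}\times S_w$ and injective, hence a Borel isomorphism onto the paper's $\phi(\G)$.
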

\begin{proof}
    Consider the map $C: \G \times_{\pi} X \rightarrow X$ given by $(g,x) \mapsto g \cdot x$. For each $g \in \G$, let $C_{g}: X_{\pi(g)} \rightarrow X_{\pi(g)}$ denote the cross section of $C$. By Lemma \ref{Def: alpha and beta are borel sections}, for all $g \in \G$, $C_{g}$ is Borel with respect to the restricted Borel structures on $X_{\pi(g)}$. Let $\{\theta_{i,j}\}$ be the total sequence of Borel sections $Y \rightarrow X$ as before. Then consider the map $\phi: \G \rightarrow Y \times X^{\N \times \N}$ given by: 
    \begin{align*}
        g \mapsto (\pi(g), (C_{g}(\theta_{i,j}(\pi(g))))_{(i,j) \in \N \times \N})
    \end{align*}
    We claim that $\phi(\G)$ is a Borel subset of $Y \times X^{\N \times \N}$. To see this, let $\cS$ be the set of finite words generated by the elements $\alpha$, $\beta$ and their inverses. By Lemma \ref{Def: alpha and beta are borel sections}, and the fact that composition and inverses of Borel bijections are Borel, we have that if $g \in \cS$, then $g: X \rightarrow X$ is Borel. Now consider the Borel map $B_{g} : Y \rightarrow X^{\N \times \N}$ given by $U \mapsto g \cdot (\theta_{i,j}(U))_{i,j}$. For each $g \in \cS$, since $B_{g}$ is a Borel map, the graph of $B_{g}$ is Borel (for example see \cite[Lemma A.9]{TakesakiVolume1}). Now since $\cS$ is countable, we have that the union of the graphs of $\{B_{g} \; | \; g \in \cS\}$ is a Borel subset of $Y \times X^{\N \times \N}$, which is precisely equal to $\phi(\G)$. Now by \cite[Lemma 4.1]{WoutersVaes24}, there is a unique standard Borel structure on $\G$ making the maps $\pi: \G \rightarrow Y$ and the map $C: \G \times_{\pi} X \rightarrow X$ Borel. By \cite[Proposition 4.2]{WoutersVaes24}, we have that every element of $\cS$ is a Borel section $Y \rightarrow \G$. It follows that the multiplication, inverse and identity maps are Borel and that $\G = (\G_{U})_{U \in Y}$ is a Borel field of countable discrete groups.    
\end{proof}

\section{Construction of genuine ergodic groupoids}

Letting $\nu$ be a diffuse Borel probability measure on $Y$, we get a discrete measured groupoid from Corollary \ref{Corr: G_U forms a Borel field of countable discrete groups} which satisfies the property that the isotropy groups are non-isomorphic everywhere. We will now use such a construction to construct genuine groupoids with `any' prescribed ergodic equivalence relation.

Recall the notion of a basis of a discrete measured groupoid from \cite[Definition 3.2]{BCDK24}. Roughly speaking, the idea is that a discrete measured groupoid admits a countable disjoint collection of non-null Borel bisections such that the union is the entire groupoid up to measure zero. It turns out that for ergodic groupoids, one can construct a basis consisting of global bisections. This can be seen for example by \cite[Exercise 18.15]{Kec95} and we give a proof of this in our context as the next proposition. Notice that for non-ergodic groupoids this cannot be true. For example, if we have a disjoint union of two measured fields of groups consisting of finite and infinite groups almost everywhere respectively, then such a discrete measured groupoid can never admit a basis of global bisections.  

\begin{proposition}
    \label{Prop: every ergodic groupoid admits a basis of global bisections}
    Let $\cG$ be an ergodic discrete measured groupoid with unit space $(X,\mu)$. Then there is a basis of global bisections, i.e.,  countable disjoint (up to measure zero) family $B_{n}$ of Borel global bisections such that $\bigsqcup_{n} B_{n} = \cG$ up to measure zero.   
\end{proposition}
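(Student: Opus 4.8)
The plan is to reduce the statement to a standard fact about countable-to-one Borel maps, namely the Lusin--Novikov uniformization theorem. Recall that $\cG$ is a discrete measured groupoid, so the source map $s : \cG \to X$ is a countable-to-one Borel map; by Lusin--Novikov (see \cite[Theorem 18.10]{Kec95}, or the more convenient formulation in \cite[Exercise 18.15]{Kec95}) there is a countable Borel partition $\cG = \bigsqcup_{n} C_{n}$ such that $s\restriction_{C_{n}}$ is injective for every $n$. Replacing each $C_{n}$ by the countably many pieces $C_{n} \cap t\inv(D_{m})$ for a similar decomposition of $t$, we may assume that each $C_{n}$ is a Borel bisection, i.e.\ both $s\restriction_{C_n}$ and $t\restriction_{C_n}$ are injective. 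This already gives a countable Borel partition of $\cG$ into Borel bisections; the work is to upgrade these to \emph{global} bisections using ergodicity and to control null sets.

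First I would use ergodicity to ensure that the source projections $s(C_n)$, and similarly the target projections, can be arranged to cover $X$ up to measure zero in a controlled way. The key point is that an ergodic countable measured equivalence relation $\cR_{\cG}$ is, by \cite{Feldman-Moore-1}, generated by a countable group of nonsingular transformations, and more relevantly here: given any two non-null Borel sets $A, B \subseteq X$, ergodicity of $\cR_{\cG}$ lets us find a Borel bisection of $\cR_{\cG}$ (hence, by lifting through the bisection $\cG^{(0)} \subseteq \cG$ together with existing pieces $C_n$, a Borel bisection of $\cG$) whose source lies in $A$ and target lies in $B$, at least for a non-null portion; iterating an exhaustion argument one builds, for a fixed Borel bisection $C_n$ with $s(C_n) = S_n$ and $t(C_n) = T_n$, additional disjoint Borel bisections that ``fill out'' $X \setminus S_n$ on the source side and $X \setminus T_n$ on the target side. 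Concretely, I would take a maximal (via a Zorn/exhaustion argument over a $\sigma$-finite measure, which terminates after countably many steps since $\mu$ is finite) disjoint family of non-null Borel bisections refining the $C_n$; ergodicity forces the union of their sources to be co-null and the union of their targets to be co-null, for otherwise one could enlarge the family using an $\cR_{\cG}$-bisection connecting the leftover source mass to the leftover target mass.

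Having a countable disjoint family $\{B_k\}$ of Borel bisections with $\bigcup_k s(B_k) = X$ and $\bigcup_k t(B_k) = X$ up to measure zero, I would then assemble genuine \emph{global} bisections by a back-and-forth matching: order the $B_k$, and greedily select subsets $B_k' \subseteq B_k$ so that the $s(B_k')$ partition $X$ and the $t(B_k')$ partition $X$ (up to null sets); this is the standard argument producing a bijection between two $\sigma$-finite measure spaces each partitioned into countably many pieces, realized at the level of Borel maps via the already-injective $s\restriction_{B_k}$ and $t\restriction_{B_k}$. Re-indexing the resulting pieces into countably many global bisections $B_n$ (one can always merge Borel bisections with disjoint sources \emph{and} disjoint targets into a single bisection) yields the claim. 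The main obstacle I anticipate is the bookkeeping needed to simultaneously control the source \emph{and} target projections while keeping everything Borel and disjoint up to genuine null sets rather than merely measure-theoretically: this is exactly the content of \cite[Exercise 18.15]{Kec95}, so in the write-up I would lean on that reference and Lusin--Novikov and keep the ergodicity input to the single clean statement ``leftover source mass and leftover target mass of equal measure can be connected by a Borel bisection,'' which follows from ergodicity of $\cR_{\cG}$ together with the Feldman--Moore generating transformations.
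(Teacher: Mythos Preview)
Your approach is aligned with the paper's, which gives no proof and simply points to \cite[Exercise~18.15]{Kec95}; you cite the same source and attempt to unpack it. One caution about your exhaustion step: in the genuinely nonsingular setting a maximal family of disjoint bisections extending a given $C_n$ can terminate with, say, the target side conull while the source side still has positive leftover mass, since partial bisections need not preserve $\mu$; your clause ``otherwise one could enlarge the family using an $\cR_{\cG}$-bisection connecting the leftover source mass to the leftover target mass'' tacitly assumes both leftovers are simultaneously non-null. The cleaner route---and the way Exercise~18.15 is meant to be applied---is to use ergodicity only once, to conclude that $|s^{-1}(x)|$ (hence $|t^{-1}(x)|$, via inversion) is a.e.\ equal to a fixed $N \le \aleph_0$; restricting to a conull set this becomes a purely Borel hypothesis, and one factors $\cG \to \cR_{\cG} \subseteq X \times X$, applies Lusin--Novikov to the first map (fibers of constant size $|\Gamma_x|$) and Exercise~18.15 verbatim to the second (sections of constant size $|\cR_{\cG}(x)|$), obtaining the desired partition without any measure-theoretic back-and-forth.
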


\begin{proof}
    As in \cite[Proposition 3.1]{BCDK24}, let $\cB = \{X = B_0, B_1, B_2,B_3,...\}$ be a basis of Borel bisections for the groupoid $\cG$. We will use ergodicity to `patch up' these bisections and construct global bisections in the process. For convenience, let us label the Borel sets $\cG_E = \{g \in \cG \; | \; s(g) \in E\}$, $\cG^E = \{g \in \cG \; | \; t(g) \in E\}$ and $\cG^F_E = \{g \in \cG \; | \; s(g) \in E, t(g) \in F\}$ for Borel subsets $E,F \subseteq X$. We construct a new set of Borel bisections $\cV = \{V_0,V_1,V_2,...\}$ of $\cG$. First let $V_0 = B_0 = X$ and suppose that $V_1,V2,...,V_i-1$ are constructed, we shall now construct $V_i$. Let $\widetilde{V} = \bigsqcup_{k=1}^{i-1}$ and let $V_i^i = B_i \backslash \widetilde{V}$. For $j > i$, recursively define the following Borel sets: 
    \begin{align*}
        E^{j}_{i} &= X \backslash s(\bigsqcup_{k = i}^{j-1} V^{k}_{i}) \\
        F^{j}_{i} &= X \backslash t(\bigsqcup_{k=i}^{j-1} V^{k}_{i}) \\
        V^{j}_{i} &= \cG^{F^{j}_i}_{E^{j}_i} \cap (B_j \backslash \widetilde{V})
    \end{align*}
    Notice that by construction and because the maps $s$ and $t$ are countable-to-one, all of the above sets are Borel. Notice also that $V_{i}^{j}$ and $V_{i}^{k}$ are disjoint up to null sets if $j \neq k$. Now we define $V_{i} \coloneqq \bigsqcup_{j \geq i} V_{i}^{j}$ and notice that $V_{i}$ is still a bisection as $V_i^{j}$ and $V^{k}_{i}$ have disjoint sources and ranges for all $j\neq k$. Notice that by construction we also have that $V_{i}$'s are mutually disjoint up to null sets, and since $\bigsqcup_{n \geq 0} B_{n} \subseteq \bigsqcup_{n \geq 0} V_{n}$, we even have that $\cV$ is a basis for $\cG$. 

    The only thing left to show is that $s(V_{n}) = t(V_n) = X$ up to measure zero, i.e., the bisections are global. Notice that $V_0$ is global and assume that $V_0, V_1, ..., V_{i-1}$ are all global bisections. We will prove that $V_i$ is a global bisection. Since $\cB$ is a basis, if $V_i$ is not a global bisection, it follows that for some non-null Borel subsets $E,F \subseteq X$, we have that $V_j \subseteq \cG^F_E$ for all $j \geq i$ or equivalently $s(\cG \backslash \widetilde{V}) = E$ and $t(\cG \backslash \widetilde{V}) = F$. One can check that $f: X \rightarrow \N$ given by $x \mapsto \#\{s^{-1}(x)\}$ is a Borel map, essentially because $s$ is countable-to-one. Hence $f$ is a $\cG$-invariant Borel map that takes different values on $E$ and $X \backslash E$, which contradicts the ergodicity of $\cG$.
\end{proof}

Now we can use Proposition \ref{Prop: every ergodic groupoid admits a basis of global bisections} to prove that every ergodic equivalence relation on a diffuse standard probability space admits genuine measured groupoids. 

\begin{theorem}
    \label{Thm: main theorem ergodic}
        Let $\cR$ be an ergodic equivalence relation on a standard probability space $(X,\mu)$. Then the following are equivalent: 
        \begin{enumerate}
            \item The probability measure $\mu$ is diffuse (equivalently, $\cR$ is not of type I).
            \item There exists a genuine discrete measured groupoid $\cG$ with unit space $(X,\mu)$ and associated equivalence relation $\cR$. 
            \item There exists a genuine icc discrete measured groupoid $\cG$ with unit space $(X,\mu)$ and associated equivalence relation $\cR$.
        \end{enumerate}
\end{theorem}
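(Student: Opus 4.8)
The plan is to prove the cycle of implications $(3)\Rightarrow(2)\Rightarrow(1)$ (both essentially immediate or following from the type-I discussion) and then $(1)\Rightarrow(3)$, which is the main content. For $(2)\Rightarrow(1)$: if $\mu$ is atomic on some non-null $\cR$-invariant piece, then since $\cR$ is ergodic the whole space is a single orbit with counting-type measure, and one argues directly that any discrete measured groupoid over an atomic unit space with a transitive equivalence relation is a transformation groupoid (pick a base point, use the isotropy group there, and transport along a basis of global bisections from Proposition~\ref{Prop: every ergodic groupoid admits a basis of global bisections}); hence no genuine groupoid exists, so $(2)$ forces $\mu$ diffuse. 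The equivalence with ``$\cR$ not of type I'' is the standard fact that an ergodic type-I equivalence relation is the complete relation on an at most countable set, which is atomic. The implication $(3)\Rightarrow(2)$ is trivial since icc genuine groupoids are in particular genuine.

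The heart is $(1)\Rightarrow(3)$. Assume $\mu$ diffuse. First I would fix, using Corollary~\ref{Corr: G_U forms a Borel field of countable discrete groups}, the Borel field $\G=(\G_U)_{U\in Y}$ of pairwise non-isomorphic $2$-generated groups. Next, using Proposition~\ref{Prop: every ergodic groupoid admits a basis of global bisections}, fix a basis $(B_n)_{n\in\N}$ of global bisections for $\cR$, viewed as partial transformations; composing these gives, for each $(x,y)\in\cR$, a canonical countable enumeration of the arrows and in particular a Borel way to index the orbit of $x$. Using this I would transport the field $\G$ along the orbit: choose a Borel map $X\to Y$ (any Borel map works, e.g.\ pull back a generator of the $\sigma$-algebra), set $G_x=\G_{c(x)}$, and then define $H_x=\bigoplus_{z\in[x]_\cR} G_z$, the countable direct sum over the orbit, which is a measured field of countable discrete groups by Lemma~\ref{Lemma: sums of fields of groups} (the orbit being countable and Borel-parametrized by the basis). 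Crucially, because $\cR$ permutes the summands, for $(x,y)\in\cR$ there is a canonical isomorphism $\delta_{(y,x)}:H_x\to H_y$ obtained by re-indexing the direct sum along the corresponding arrow; this family $(\delta_{(y,x)})$ satisfies the cocycle identities of Section~2.3, so it is an action of $\cR$ on $(H_x)_{x\in X}$. Form $\cG_0=H\rtimes\cR$. To upgrade to the icc property, replace $H_x$ by $H_x\wr\Z$ (or $\wr$ a suitable group), still a measured field by Lemma~\ref{Lemma: wreath products}, extend the $\cR$-action coordinatewise, and set $\cG=(H\wr\Z)\rtimes\cR$; one then checks $\cG$ is icc using the wreath-product structure and ergodicity of $\cR$, along the lines of \cite{BCDK24}.

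It remains to see $\cG$ is genuine, i.e.\ not isomorphic to an equivalence relation or a transformation groupoid. It is not an equivalence relation because its isotropy groups $H_x$ (or $H_x\wr\Z$) are non-trivial on a positive measure set. For the transformation-groupoid obstruction I would argue by contradiction: if $\cG\cong X'\rtimes\Lambda$ for a countable group $\Lambda$ acting non-singularly on $(X',\mu')$, then almost every isotropy group of $\cG$ is a subgroup of $\Lambda$, hence the isotropy groups are, up to isomorphism, drawn from the countable set of subgroups (up to conjugacy $\Lambda$ has only countably many finitely generated subgroups? — careful: it can have uncountably many, so one must instead use that each $H_x$ embeds into the \emph{fixed} group $\Lambda$). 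The key point is that each $H_x$ contains, as subgroups, copies of $\G_U$ for a whole orbit's worth of distinct $U\in Y$; since $\mu$ is diffuse and the map $c$ can be arranged so that $c$ takes uncountably many values on a single orbit's worth of positive-measure-many base points, we produce uncountably many pairwise non-isomorphic finitely generated groups all embedding into the single countable group $\Lambda$. This is the contradiction we want, \emph{provided} we know a countable group has only countably many isomorphism classes of finitely generated subgroups — which is true: a finitely generated subgroup is determined by a finite tuple of elements of $\Lambda$, and there are only countably many such tuples. This is the step I expect to be the main obstacle: arranging the transported field $(H_x)$ so that on a positive-measure set of orbits uncountably many non-isomorphic $\G_U$ genuinely appear as subgroups, and making sure the non-isomorphism of the $\G_U$ (Proposition~\ref{Prop: uncountable family of non isomorphic groups}) survives passage to the direct sum and wreath product, so that it truly obstructs a single-group model. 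I would handle this by noting that the finite normal subgroups of $\G_U$ (the $A_{u_i}$, hence $\G_U$ determines $U$) remain recognizable inside $H_x=\bigoplus_z\G_{c(z)}$ as finite normal subgroups of the direct summands, so the multiset $\{c(z): z\in[x]_\cR\}$ is an isomorphism invariant of $H_x$; choosing $c$ injective on a Borel transversal-free way (using diffuseness of $\mu$ to guarantee each orbit's image under $c$ is infinite, indeed hits uncountably many values across positive measure) then forces the desired contradiction.
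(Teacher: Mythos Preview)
Your strategy matches the paper's, but two points in the $(1)\Rightarrow(3)$ argument need fixing.

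First, the map $c:X\to Y$ cannot be ``any Borel map'': a constant $c$ would make all the $G_x$ isomorphic and the contradiction collapses. You must take $c$ to be a Borel \emph{isomorphism} (both $X$ and $Y$ are uncountable standard Borel spaces), which is essentially what the paper does by identifying $(X,\mu)$ with $(Y,\nu)$ for some diffuse $\nu$ and setting $G^0_x=\G_x$.

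Second, the final contradiction is much simpler than you make it, and your version contains a confusion. You try to find uncountably many non-isomorphic $\G_U$ inside a \emph{single} $H_x$ by looking along its orbit, but each orbit is countable, so the phrase ``$c$ takes uncountably many values on a single orbit's worth'' cannot be right, and the detour through recognizing finite normal subgroups inside the direct sum is unnecessary. The paper's argument is simply: for a.e.\ $x$, the group $G^0_x$ sits inside $H_x$ as the summand indexed by $\theta_0=\id$; hence if $\cG$ were the transformation groupoid of an action of some countable $\Lambda$, then $G^0_x\hookrightarrow\Lambda$ for a.e.\ $x$. With $c$ a Borel isomorphism and $\mu$ diffuse, this yields uncountably many pairwise non-isomorphic finitely generated subgroups of the countable group $\Lambda$, contradicting exactly the counting fact you already stated. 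A minor stylistic difference: the paper takes the wreath product \emph{before} the direct sum (setting $G_x=G^0_x\wr K$ for an infinite $K$ and then $H_x=\bigoplus_n G_{\theta_n(x)}$) rather than after as you do; both give genuineness by the same argument, but the paper's placement makes the icc verification immediate since each summand is already icc.
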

\begin{proof}
    We first prove that 2 implies 1 by showing that if $\mu$ is atomic, then any discrete measured groupoid with associated equivalence relation $\cR$ is a transformation groupoid. Since $\cR$ is ergodic and $(X,\mu)$ is atomic, we can assume without loss of generality that $\cR$ is the full equivalence relation on an at most countable set $X$ and $\mu$ is the counting measure. Consequently, the isotropy groups $\Gamma_{x}$ are all isomorphic and let us denote this group by $\Gamma$. 
    
    Since $\cG$ is amenable, by \cite[Proposition 6.5]{Popa-Shlyakhtenko-Vaes20}, $\cG$ can be written as a semi-direct product $\cR \ltimes_{\delta} (\Gamma_{x})_{x \in X}$ where $\delta$ is an action of $\cR$ on the measured field of groups $(\Gamma_{x})_{x \in X}$. Let $G$ be a group with the same cardinality as $X$ and suppose for convenience the unit space of $\cG$ is $G$ with the counting measure. Consider the action of $G \times \Gamma$ on $G$ given by $(h,\gamma) \cdot g = hg$. Consider the transformation groupoid $(G \times \Gamma) \ltimes G$ and the map $\phi: (G \times \Gamma) \ltimes G \rightarrow \cG$ given by: 
    \begin{align*}
        \phi(g,(h,\gamma)) = ((hg,g), \delta_{(g,e)}(\gamma))
    \end{align*}
    We now check that that $\phi$ is indeed a groupoid homomorphism. First note that:
    \begin{align*}
      (hg,(k,\lambda))(g,(h,\gamma)) &= (g,(kh,\lambda\gamma)) \text{ and } \\ \phi(g,(kh,\lambda\gamma)) &= ((khg,g),\delta_{(g,e)}(\lambda\gamma))  
    \end{align*}
    One can check now that: 
    \begin{align*}
      \phi(hg,(k,\lambda)) \phi(g,(h,\gamma)) &= ((khg,hg),\delta_{(hg,e)}(\lambda))((hg,g),\delta_{(g,e)}(\gamma)) \\ &= ((khg,g),\delta_{(g,hg)}\circ \delta_{(hg,e)}(\lambda)\delta_{(g,e)}(\gamma)) \\ &= ((khg,g), \delta_{(g,e)}(\lambda\gamma)) 
    \end{align*}
    Similarly, one has that $(g,(h,\gamma))^{-1} = (hg,(h^{-1},\gamma^{-1}))$ and therefore we have $\phi((g,(h,\gamma))^{-1}) = ((g,hg),\delta_{hg,e}(\gamma^{-1}))$. One can check that
    \begin{align*}
        \phi(g,(h,\gamma))^{-1} = ((hg,g),\delta_{(g,e)}(\gamma))^{-1} = ((g,hg),\delta_{hg,e}(\gamma^{-1})) 
    \end{align*}
    It is easy to note now that $\phi$ is a bijective and since the multiplication and the action $\delta$ is Borel, $\phi$ is a Borel groupoid isomorphism. 

    Now we prove that 1 implies 2, essentially using Proposition \ref{Prop: every ergodic groupoid admits a basis of global bisections} and Corollary \ref{Corr: G_U forms a Borel field of countable discrete groups}. Suppose that $\cR$ is an ergodic equivalence relation on the diffuse standard probability space $(X,\mu)$ and as in Proposition \ref{Prop: every ergodic groupoid admits a basis of global bisections} consider a countable family of bijective Borel functions $\theta_{n}: X \rightarrow X$ such that the global bisections $B_{n} = \{(\theta_{n}(x),x) \; | \; x \in X\}$ covers all of $\cR$. Let $(G^{0}_{x})_{x \in X}$ be a measured field of finitely generated pairwise non-isomorphic groups (for example as in Corollary \ref{Corr: G_U forms a Borel field of countable discrete groups}). Let $K$ be an infinite group and note that by \cite[Proposition 1.3.10]{AnaPopa10} the wreath product $G_{x} = G^{0}_{x} \wr K$ is an icc group for all $x \in X$. By Lemma \ref{Lemma: wreath products}, the family of groups $(G_{x})_{x \in X}$ is a measured field of discrete countable groups. 

    Now consider the family of groups $(H_{x})_{x \in X}$ given by $H_{x} = \bigoplus_{n} G_{\theta_{n}(x)}$ where we let $\theta_{0} = \id$. Since $\theta_{n}$ is Borel for all $n$, we have that $(G_{\theta_{n}(x)})_{x \in X}$ is a measured field of groups for all $n$. By Lemma \ref{Lemma: sums of fields of groups}, we have that $H = (H_{x})_{x \in X}$ forms a measured field of groups as well. For each $x \in X$, let $\phi_{x} : \mathbb{N} \rightarrow X$ be defined as $\phi_{x} (n) = \theta_{n}(x)$. By definition, for each $x\in X$, one gets a bijection $\phi_{x}$ between $\mathbb{N}$ and the orbit $\cR(x)$ of $x$. Now for each $(x,y) \in \cR$, we define a permutation $\sigma_{(y,x)}$ of $\mathbb{N}$ by $\sigma_{(y,x)} \coloneqq \phi_{y}^{-1} \circ \phi_{x}$. By construction, $\theta_{\sigma_{(y,x)}(n)}(y) = \theta_n(x)$ for all $(y,x) \in \cR$ and $n \in \mathbb{N}$. Now consider the following family of group isomorphisms $(\delta_{(y,x)})_{(y,x) \in \cR}$, where $\delta_{(y,x)}: H_{y} \rightarrow H_{x}$ sends an element $(h_{n})_{n \in \mathbb{N}} \in H_{y}$ to $(h_{\sigma_{(y,x)}(n)})_{n \in \mathbb{N}}$. 
    
    One can check, almost by construction that this gives an action $\delta: \cR \actson (H_{x})_{x \in X}$ and let $\cG$ be the semidirect product groupoid. We claim that $\cG$ is genuine. Since $\cG$ has non-trivial isotropy everywhere, it is not an equivalence relation. Assume now that $\cG$ is the transformation groupoid of an action $K \actson X$, then $H_{x}$ is a subgroup of $K$ for a.e. $x \in X$ which is a contradiction as $G^{0}_{x} < G_{x} < H_{x}$ are all finitely generated and mutually non-isomorphic. Since $K$ is a countable discrete group, it has at most countably many finite subsets and hence countably many finitely generated subgroups, thus contradicting the assumption and proving the claim. This proves that 1 implies 3 and since 3 clearly implies 2, this completes the proof.   
\end{proof}

If $\cG$ is an icc ergodic discrete measured groupoid with associated equivalence relation $\cR$, then the factor $L(\cG)$ has the same type as the equivalence relation $\cR$. Moreover when $\cR$ is of type III, the flow of weights of $L(\cG)$ corresponds to the associated Krieger flow of $\cR$. Hence Corollary \ref{Thm: main theorem ergodic} gives a class of examples of factors that arise from genuine groupoids with any prescribed type and any prescribed flow of weights. Recall from Section \ref{Sec: equivalence relations and ergodic decomposition} that every discrete measured groupoid decomposes into a direct integral of ergodic ones. This leads to the following easy consequence of Theorem \ref{Thm: main theorem ergodic}.

\begin{corollary}
\label{Corr: groupoids of type I are not genuine}
    Let $\cR$ be a countable measured equivalence relation on a standard probability space $(X,\mu)$. Then the following are equivalent: 
    \begin{enumerate}
        \item $\cR$ is not of type I,
        \item There exists a genuine discrete measured groupoid $\cG$ with associated equivalence relation $\cR$. 
    \end{enumerate}
\end{corollary}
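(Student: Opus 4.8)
The plan is to deduce both implications from Theorem~\ref{Thm: main theorem ergodic}, combined with the ergodic decomposition of Section~\ref{Sec: equivalence relations and ergodic decomposition} and the smooth/non-smooth splitting $X = X_{\s} \sqcup X_{\nsi}$ of Notation~\ref{not: smooth and non smooth}; recall that $\cR$ is of type I precisely when $X_{\nsi}$ is null.

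For $2 \Rightarrow 1$ I would argue by contraposition. Suppose $\cR$ is of type I and let $\cG$ be any discrete measured groupoid with associated equivalence relation $\cR$; then $\cG = \cG_{\s}$ up to null sets, so by the ergodic decomposition theorem $\cG$ is a direct integral of ergodic groupoids $\cG_z$ whose associated equivalence relation is of type I$_n$ for some $n$, i.e.\ the full relation on an at most countable set. In particular almost every $\cG_z$ has atomic unit space, so by the ``$2 \Rightarrow 1$'' argument in the proof of Theorem~\ref{Thm: main theorem ergodic} almost every $\cG_z$ is a transformation groupoid, hence not genuine. Thus no non-null set of ergodic components of $\cG$ is genuine, and $\cG$ is not genuine by Definition~\ref{Def: genuine groupoids}.

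For $1 \Rightarrow 2$, assume $X_{\nsi}$ is non-null. An atom of $\mu$ has a countable orbit consisting of atoms, on which $\cR$ is of type I$_n$, so all atoms of $\mu$ lie in $X_{\s}$; hence $\mu|_{X_{\nsi}}$ is diffuse and, after normalising, $(X_{\nsi},\mu|_{X_{\nsi}})$ is a diffuse standard probability space. I put $\cG|_{X_{\s}} = \cR_{\s}$, and on $X_{\nsi}$ I carry out the ``$1 \Rightarrow 2$'' construction from the proof of Theorem~\ref{Thm: main theorem ergodic}: ergodicity enters there only to obtain a basis of \emph{global} bisections via Proposition~\ref{Prop: every ergodic groupoid admits a basis of global bisections}, but it is enough to use any basis of Borel bisections $\{B_n\}$ of $\cR_{\nsi}$ (which exists for every countable measured equivalence relation, \cite[Definition 3.2]{BCDK24}; equivalently one may use the generating automorphisms of the Feldman--Moore theorem \cite{Feldman-Moore-1}). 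Pulling back the Borel field $\G$ of Corollary~\ref{Corr: G_U forms a Borel field of countable discrete groups} along a Borel isomorphism $(X_{\nsi},\mu|_{X_{\nsi}}) \cong (Y,\nu)$ gives a measured field $(G^{0}_{x})_x$ of finitely generated groups that are pairwise non-isomorphic along every orbit; setting $G_x = G^{0}_x \wr K$ for an infinite group $K$ (Lemma~\ref{Lemma: wreath products}) and $H_x = \bigoplus_n G_{\theta_n(x)}$ (summing over the $n$ for which $\theta_n$ is defined at $x$; Lemma~\ref{Lemma: sums of fields of groups}), each $H_x$ is canonically identified with $\bigoplus_{y \in \cR_{\nsi}(x)} G_y$, so the isomorphisms $\delta_{(y,x)} : H_y \to H_x$ defining the $\cR_{\nsi}$-action are forced, with no choices to make. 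Letting $\cG|_{X_{\nsi}} = (H_x)_x \rtimes \cR_{\nsi}$ and $\cG = \cG|_{X_{\s}} \sqcup \cG|_{X_{\nsi}}$ gives a discrete measured groupoid with associated equivalence relation $\cR_{\s} \sqcup \cR_{\nsi} = \cR$.

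It remains to check that $\cG$ is genuine. The ergodic decomposition of $\cG|_{X_{\nsi}}$ refines that of $\cR_{\nsi}$, and over almost every ergodic component $z$ the component $\cG_z$ is the semidirect product $(H_x)_{x \in X_z} \rtimes \cR_z$ over the diffuse space $(X_z,\mu_z)$, i.e.\ exactly the groupoid shown to be genuine in the proof of Theorem~\ref{Thm: main theorem ergodic}: its isotropy is non-trivial, so it is not an equivalence relation, and if it were a transformation groupoid $K \ltimes X_z$ then $G^{0}_x$ would embed in the countable group $K$ for a.e.\ $x \in X_z$, which is impossible since these finitely generated groups realise uncountably many isomorphism classes while $K$ has only countably many finitely generated subgroups up to isomorphism. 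As $X_{\nsi}$ is non-null, $\cG_z$ is genuine on a non-null set of $z$, so $\cG$ is genuine. The step I anticipate as the main obstacle is this last bookkeeping: verifying that the globally defined $(H_x)_x \rtimes \cR_{\nsi}$ disintegrates, over a.e.\ ergodic component, into the semidirect product groupoid of Theorem~\ref{Thm: main theorem ergodic}, and that $x \mapsto G^{0}_x$ realises uncountably many isomorphism classes on a.e.\ ergodic component (which follows from the Borel isomorphism $(X_{\nsi},\mu|_{X_{\nsi}}) \cong (Y,\nu)$ being injective on a conull set, together with the disintegration of $\mu$ along the ergodic decomposition). Everything else is a direct appeal to Theorem~\ref{Thm: main theorem ergodic}, the Feldman--Moore theorem, and the ergodic decomposition.
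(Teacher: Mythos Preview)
Your proposal is correct and follows essentially the same approach as the paper: both deduce $2\Rightarrow 1$ from Theorem~\ref{Thm: main theorem ergodic} via the ergodic decomposition, and for $1\Rightarrow 2$ both replace the global bisections of Proposition~\ref{Prop: every ergodic groupoid admits a basis of global bisections} by a basis of partial Borel bisections (padding the direct sum with trivial groups where $\theta_n$ is undefined) and then verify genuineness on almost every ergodic component. The only cosmetic difference is that the paper passes explicitly to ergodic-decomposition coordinates $Z_0\times Z$, indexing the Neumann field by the fibre $Z_0$ and treating the case of atomic $(Z,\eta)$ separately, whereas you index the field directly by $X_{\nsi}$ and avoid that case split; the bookkeeping step you flag as the main obstacle is exactly what the paper's choice of coordinates is designed to make transparent.
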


\begin{proof}
    To show that 2 implies 1, one simply notes that if $\cG$ is a discrete measured groupoid of type I, then almost every ergodic groupoid in the ergodic decomposition of $\cG$ is of type I, and hence the statement follows from Theorem \ref{Thm: main theorem ergodic}. To show that 1 implies 2, without loss of generality we can assume that $\cR = \cR_{\nsi}$ as in Notation \ref{not: smooth and non smooth}. Suppose that $(Z,\eta)$ is a standard probability space and $(\cR_{z})_{z \in Z}$ is the ergodic decomposition of $\cR$, where $\cR_{z}$ is an ergodic equivalence relation on a standard probability space $(Z_{0}, \nu_{z})$ for a.e. $z \in Z$. If $(Z, \eta)$ is atomic, then $\cR$ is a disjoint union of countably many ergodic equivalence relations. It follows once again from Theorem \ref{Thm: main theorem ergodic} that there exists a genuine groupoid with associated equivalence relation $\cR$. 
    
    Now suppose that $(Z,\eta)$ is diffuse and let $\nu$ be the integral of the $(\nu_{z})_{z \in Z}$ with respect to $\eta$. We know that $\cR$ can be written as an equivalence relation on $(Z_{0} \times Z, \nu)$ given by $(w,z) \sim (w',z')$ if and only if $z = z'$ and $(w,w') \in \cR_{z}$. Consider a measured field $G = (G_{w})_{w \in Z_{0}}$ of discrete countable finitely generated groups such that they are pairwise non-isomorphic almost everywhere, as in Corollary \ref{Corr: G_U forms a Borel field of countable discrete groups}. Consider the measured field of groups $H = (H_{(w,z)})_{(w,z) \in Z_{0} \times Z}$ defined as $H_{(w,z)} \coloneqq G_{w}$ for all $z \in Z$ and $w \in Z_{0}$. Since $\cR$ is not necessarily ergodic, we do not necessarily have a basis of global bisections as in Proposition \ref{Prop: every ergodic groupoid admits a basis of global bisections}. However we can work around this issue as follows. 
    
    Let $\cV = \{V_{0}, V_{1}, V_{2}, V_{3},...\}$ be a basis for $\cR$ where $V_{0}$ is the unit space as in \cite[Proposition 3.1]{BCDK24}. Suppose that each partial Borel bisection $V_{n}$ is given by a Borel map $\theta_{n}: A_{n} \rightarrow B_{n}$ with $A_{n}$ and $B_{n}$ Borel subsets of $Z_{0} \times Z$. Let $H^{n} = (H^{n}_{(w,z)})_{(w,z) \in Z_{0} \times Z}$ be the Borel field of groups given by $H^{n}_{(w,z)} \coloneqq G_{\theta_{n}(w,z)}$ if $(w,z) \in A_{n}$ and $H^{n}_{(w,z)} \coloneqq \{e\}$ if $(w,z) \notin A_{n}$. Since $A_{n}$ is a Borel subset of $Z_{0} \times Z$, $H^{n}$ is a Borel field of groups for all $n$ with $H^0 = H$. By Lemma \ref{Lemma: sums of fields of groups}, we have that $(K_{(w,z)})_{(w,z) \in Z_{0} \times Z} = (\bigoplus_{n} H^n_{(w,z)})_{(w,z) \in Z_{0} \times Z}$ is again a Borel field of groups. Since $\cR = \cR_{\nsi}$, we have that almost every orbit of $\cR$ is strictly infinite and we can hence construct an action of $\cR \actson (K_{(w,z)})_{(w,z) \in Z_{0} \times Z}$ exactly as in Theorem \ref{Thm: main theorem ergodic} by considering permutations of $\N$. 
    
    Letting $\cG$ be the semidirect product groupoid, one can see almost by construction that $\cG$ admits a direct integral decomposition $\cG = \int^{\oplus}_{Z} \cG_{z}$ where for almost every $z \in Z$, we have that $\cG_{z}$ is the semidirect product groupoid of the restricted action $\cR_{z} \actson (K_{(w,z)})_{w \in Z_{0}}$. Now exactly as in the proof of Theorem \ref{Thm: main theorem ergodic}, we have that $\cG_{z}$ is genuine for almost every $z \in Z$ because $K_{(w,z)}$ contains $G_{w}$ as a subgroup for almost every $w \in W$. Thus we can conclude, by definition that $\cG$ is a genuine discrete measured groupoid with associated equivalence relation $\cR$.    
\end{proof}

\emergencystretch3em
\printbibliography

\end{document}